\providecommand{\U}[1]{\protect\rule{.1in}{.1in}}
\newtheorem{theorem}{Theorem}
\theoremstyle{plain}
\newtheorem{corollary}{Corollary}
\numberwithin{equation}{section}
\begin{document}
\title[Soliton Surf. with the Betchov-Da Rios Eq. using an Extended Darboux Frame
Field in $E^{4}$]{A Geometric Application of Soliton Surfaces associated with the Betchov-Da
Rios Equation using an Extended Darboux Frame Field in $E^{4}$}
\subjclass[2010]{53A07, 53A10.}
\keywords{Betchov-Da Rios equation, Extended Darboux frame, Curvature ellipse, Wintgen inequality.}
\author[A. Kazan, M. Alt\i n]{\bfseries Ahmet Kazan$^{1\ast}$ and Mustafa Alt\i n$^{2}$}
\address{$^{1}$Department of Computer Technologies, Do\u{g}an\c{s}ehir Vahap
K\"{u}\c{c}\"{u}k Vocational School, Malatya Turgut \"{O}zal University,
Malatya, Turkey \\
 \newline
$^{2}$Department of Mathematics, Faculty of Arts and Sciences, Bing\"{o}l University, Bing\"{o}l,
Turkey \\
 \newline
$^{\ast}$Corresponding author: ahmet.kazan@ozal.edu.tr}

\begin{abstract}
In this paper, for a soliton surface $\Omega=\Omega(u,v)$ associated with the
Betchov-Da Rios equation, we obtain the derivative formulas of an extended
Darboux frame field of a unit speed curve $u$-parameter curve $\Omega
=\Omega(u,v)$ for all $v$. Also, we get the geometric invariants $k$ and $h$
of the soliton surface $\Omega=\Omega(u,v)$ and we obtain the Gaussian
curvature, mean curvature vector and Gaussian torsion of $\Omega$. We give
some important geometric characterizations such as flatness, minimality and
semi-umbilicaly with the aid of these invariants. Additionally, we study the
curvature ellipse of the Betchov-Da Rios soliton surface and Wintgen ideal
(superconformal) Betchov-Da Rios soliton surface with respect to an extended
Darboux frame field. Finally, we construct an application for the Betchov-Da
Rios soliton surface with the aid of an extended Darboux frame field.

\end{abstract}
\maketitle


\section{\textbf{General Information and Basic Concepts}}

An important example of integrable curve dynamics is the vortex filament
equation (VFE) which describes the self-induced motion of a vortex filament in
an ideal fluid. The VFE (also known as the smoke ring equation or localized
induction equation (LIE)) is an evolution equation for the space curves in
$R^{3}$ and it was introduced by L.S. Da Rios as a model for the motion of a
one-dimensional vortex filament in an incompressible, inviscid
three-dimensional fluid \cite{Rios}. If the position vector of the vortex
filament is $\Omega=\Omega(u,v)$, then the relation
\[
\Omega_{v}=\Omega_{u}\times\Omega_{uu}%
\]
which is called the vortex filament equation holds. It can also be written in
terms of the Frenet-Serret frame of a space curve $\gamma(u,v)$ as
\[
\gamma_{v}=T\times\kappa N=\kappa B.
\]
Here, the curve $\gamma(u,v)$ is a vector valued function in $R^{3}$; $u$ is
the arc-length parameter; $t$ is the time parameter; $T$, $N$, $B$ are the
tangent, normal, binormal vectors, respectively and $\kappa$ is the curvature
function of the curve $\gamma$.

Furthermore, the thin filament is expressed, smooth and without
self-intersection. The velocity induced by a vortex line at an external point
is expressed by Da Rios via the so-called localized induction approximation
(LIA). The movement of a thin vortex in a thin inviscid fluid by the motion of
a curve propagating in $R^{4}$ is described by the following equation
\begin{equation}
\Omega_{v}=\Omega_{u}\times\Omega_{uu}\times\Omega_{uuu}. \label{BDReq}%
\end{equation}
This is called the Betchov-Da Rios equation or LIE, and can be viewed as a
dynamical system on the space of curves in $R^{4}$. For more details about
vortex filaments and the Betchov-Da Rios equation, we refer to \cite{Barros},
\cite{Betchov}, \cite{Rios}, \cite{Grbovic}, \cite{Hasimoto1},
\cite{Hasimoto2}, \cite{Melek}, and etc.

On the other hand, frame fields are one of the most important tools for
researchers who want to obtain important differential geometric properties of
curves and (hyper)surfaces in three and higher-dimensional spaces. In this
context, Frenet frame fields, one of the most famous of these frame fields,
have been used by researchers to characterize curves in three-dimensional
spaces for a long time. Moreover, the generalization of this frame field to
higher-dimensional spaces is also known and used extensively. In
three-dimensional spaces, another one of the most important alternative frame
fields to the Frenet frame field is the Darboux frame field. Frenet frame
fields have been generalized to higher-dimensional spaces, and many
characterizations of the curves and surfaces in these spaces have been given
in various articles with the help of Frenet frame fields. However, Darboux
frame fields were first moved to four-dimensional space by D\"{u}ld\"{u}l and
his friends in 2017, and this frame field was called the extended Darboux
frame field (\cite{Duldul}). Subsequently, various studies began to be carried
out in four-dimensional spaces regarding this frame field (\cite{Altin},
\cite{Unluturk}, \cite{Baharduldul}, \cite{Baharduldul2}, \cite{Dulduller},
\cite{Kazan}, and etc.).

Now, let us recall the extended Darboux frame field of the second kind along a
curve in four-dimensional Euclidean space $E^{4}$.

We consider an embedding $\Omega:U\subset E^{3}\longrightarrow E^{4}$, where
$U$ is an open subset of $E^{3}$. Now, we denote $M=\Omega(U)$ and identify
$M$ and $U$ through the embedding $\Omega$. Let $\bar{\gamma}:I\longrightarrow
U$ be a regular curve and we have a curve $\gamma:I\longrightarrow M\subset
E^{4}$ defined by $\gamma(u)=$ $\Omega(\bar{\gamma}(u))$ and so, the curve
$\gamma$ is on the hypersurface $M$. If $M$ is an orientable hypersurface
oriented by the unit normal vector field $\mathcal{N}$ in$~E^{4}$ and $\gamma$
is a Frenet curve of class $C^{n}(n\geq4)$ with an arc-length parameter $u$
lying on $M$, then we denote the unit tangent vector field of the curve by $T$
and denote the hypersurface unit normal vector field restricted to the curve
by $N$, i.e.%
\[
T(u)=\gamma^{\prime}(u)\text{\ \ and\ \ }N(u)=\mathcal{N}(\gamma(u)).
\]
If the set $\{N,T,\gamma^{\prime\prime}\}$ is linearly dependent, from the
Gram-Schmidt orthonormalization method, $\{N,T,\gamma^{\prime\prime\prime}\}$
yields the orthonormal set $\{N,T,E\}$, where%
\[
E=\frac{\gamma^{\prime\prime\prime}-\left\langle \gamma^{\prime\prime\prime
},N\right\rangle N-\left\langle \gamma^{\prime\prime\prime},T\right\rangle
T}{\left\Vert \gamma^{\prime\prime\prime}-\left\langle \gamma^{\prime
\prime\prime},N\right\rangle N-\left\langle \gamma^{\prime\prime\prime
},T\right\rangle T\right\Vert }.
\]
Defining $D=N\times T\times E$, we have a new orthonormal frame field
$\left\{  T,E,D,N\right\}  $ along the curve $\gamma$ and for simplicity,
we'll call it \textit{ED}$^{2}$\textit{-frame field}. The differential
equations of ED$^{2}$-frame fields $\left\{  T,E,D,N\right\}  $ of the curve
$\gamma$ in $E^{4}$\ can be given as%
\begin{equation}
\left.
\begin{array}
[c]{l}%
T^{\prime}=\kappa_{n}N,\\
E^{\prime}=\kappa_{g}^{2}D+\tau_{g}^{1}N,\\
D^{\prime}=-\kappa_{g}^{2}E,\\
N^{\prime}=-\kappa_{n}T-\tau_{g}^{1}E,
\end{array}
\right\}  \label{darboux}%
\end{equation}
where $\kappa_{n}=\left\langle T^{\prime},N\right\rangle $ is the normal
curvature of the hypersurface in the direction of the tangent vector $T,$
$\kappa_{g}^{2}$ $=\left\langle E^{\prime},D\right\rangle $ is the geodesic
curvature of order $2$ and $\tau_{g}^{1}=\left\langle E^{\prime}%
,N\right\rangle $ is the geodesic torsion of order $1$. For more details about
the construction of the extended Darboux frame fields, we refer to
\cite{Duldul}.

\section{\textbf{Betchov-Da Rios soliton equation with respect to the ED}%
$^{2}$\textbf{-frame field in }$E^{4}$}

In this section, for a soliton surface $\Omega=\Omega(u,v)$ associated with
the Betchov-Da Rios equation, we will obtain the derivative formulas of an
extended Darboux frame field of a unit speed curve $u$-parameter curve
$\Omega=\Omega(u,v)$ for all $v$. Throughout this study, it is important to
note that we assume the normal curvature ($\kappa_{n}$) and first order
geodesic torsion ($\tau_{g}^{1}$) are non-zero.

Firstly, if $\Omega=\Omega(u,v)$ is a solution of the Betchov-Da Rios equation
such that the $u$-parameter curve $\Omega=\Omega(u,v)$ is a unit speed curve
for all $v$, then from (\ref{darboux}) we get the derivative formulas of the
ED$^{2}$-frame field according to "$u$" as%
\begin{equation}
\left.
\begin{array}
[c]{l}%
T_{u}(u,v)=\kappa_{n}(u,v)N(u,v),\\
E_{u}(u,v)=\kappa_{g}^{2}(u,v)D(u,v)+\tau_{g}^{1}(u,v)N(u,v),\\
D_{u}(u,v)=-\kappa_{g}^{2}(u,v)E(u,v),\\
N_{u}(u,v)=-\kappa_{n}(u,v)T(u,v)-\tau_{g}^{1}(u,v)E(u,v).
\end{array}
\right\}  \label{Fiu}%
\end{equation}

Now, let us obtain the derivative formulas according to "$v$". For this, we
must find the smooth functions $a_{ij},$ $i,j\in\{1,2,3,4\}$ of the equations%
\[
\left.
\begin{array}
[c]{c}%
T_{v}(u,v)=a_{11}(u,v)T(u,v)+a_{12}(u,v)E(u,v)+a_{13}(u,v)D(u,v)+a_{14}%
(u,v)N(u,v),\\
E_{v}(u,v)=a_{21}(u,v)T(u,v)+a_{22}(u,v)E(u,v)+a_{23}(u,v)D(u,v)+a_{24}%
(u,v)N(u,v),\\
D_{v}(u,v)=a_{31}(u,v)T(u,v)+a_{32}(u,v)E(u,v)+a_{33}(u,v)D(u,v)+a_{34}%
(u,v)N(u,v),\\
N_{v}(u,v)=a_{41}(u,v)T(u,v)+a_{42}(u,v)E(u,v)+a_{43}(u,v)D(u,v)+a_{44}%
(u,v)N(u,v).
\end{array}
\right\}
\]

From $\left\langle T,T\right\rangle =\left\langle E,E\right\rangle
=\left\langle D,D\right\rangle =\left\langle N,N\right\rangle =1$ and
$\left\langle T,E\right\rangle =\left\langle T,D\right\rangle =\left\langle
T,N\right\rangle =\left\langle E,D\right\rangle =\left\langle E,N\right\rangle
=\left\langle D,N\right\rangle =0$, we have $a_{ii}(u,v)=0$ and $a_{ij}%
(u,v)=-a_{ji}(u,v)$ ($i\neq j$) and so we can write%
\begin{equation}
\left.
\begin{array}
[c]{l}%
T_{v}(u,v)=a_{12}(u,v)E(u,v)+a_{13}(u,v)D(u,v)+a_{14}(u,v)N(u,v),\\
E_{v}(u,v)=-a_{12}(u,v)T(u,v)+a_{23}(u,v)D(u,v)+a_{24}(u,v)N(u,v),\\
D_{v}(u,v)=-a_{13}(u,v)T(u,v)-a_{23}(u,v)E(u,v)+a_{34}(u,v)N(u,v),\\
N_{v}(u,v)=-a_{14}(u,v)T(u,v)-a_{24}(u,v)E(u,v)-a_{34}(u,v)D(u,v).
\end{array}
\right\}  \label{Fiv}%
\end{equation}
Here we must note that we will not write $(u,v)$ for simplicity in
$a_{ij}(u,v)$, $T(u,v)$, and so on. Also, we will use the notation
$\frac{\partial f}{\partial u}$ and $f_{u}$ interchangeably, and similarly
with higher order derivatives; i.e. $\frac{\partial^{2}f}{\partial u\partial
v}$ is the same as $f_{uv}$, and so on.

Let us find the functions $a_{12}$, $a_{13}$, $a_{14}$, $a_{23}$,
$a_{24}$ and $a_{34}$. Using
\begin{equation}
\Omega_{u}=T \label{omegau}%
\end{equation}
and (\ref{Fiu}), we have%
\begin{equation}
\Omega_{uu}=\kappa_{n}N \label{omegauu}%
\end{equation}
and%
\begin{equation}
\Omega_{uuu}=-\kappa_{n}^{2}T-\kappa_{n}\tau_{g}^{1}E+\left(  \kappa
_{n}\right)  _{u}N. \label{omegauuu}%
\end{equation}
From (\ref{omegau})-(\ref{omegauuu}) and the Betchov-Da Rios equation
(\ref{BDReq}), we reach that%
\begin{equation}
\Omega_{v}=\left(  \kappa_{n}\right)  ^{2}\tau_{g}^{1}D. \label{omegav}%
\end{equation}

On the other hand, from (\ref{Fiv}) and (\ref{omegau}) we have%
\begin{equation}
\Omega_{uv}=a_{12}E+a_{13}D+a_{14}N \label{omegauv}%
\end{equation}
and from (\ref{Fiu}) and (\ref{omegav}) we get%
\begin{equation}
\Omega_{vu}=\left(  -\left(  \kappa_{n}\right)  ^{2}\tau_{g}^{1}\kappa_{g}%
^{2}\right)  E+\left(  \kappa_{n}\left(  2\tau_{g}^{1}\left(  \kappa
_{n}\right)  _{u}+\kappa_{n}\left(  \tau_{g}^{1}\right)  _{u}\right)  \right)
D. \label{omegavu}%
\end{equation}
We know that we have the compatibility condition $f_{uv}=f_{vu}$ for a $C^{2}%
$-function $f$. Thus from $\Omega_{uv}=\Omega_{vu}$, (\ref{omegauv}) and
(\ref{omegavu}), we get%
\begin{align}
a_{12}  &  =-\left(  \kappa_{n}\right)  ^{2}\tau_{g}^{1}\kappa_{g}%
^{2},\label{a12}\\
a_{13}  &  =\kappa_{n}\left(  2\tau_{g}^{1}\left(  \kappa_{n}\right)
_{u}+\kappa_{n}\left(  \tau_{g}^{1}\right)  _{u}\right)  ,\label{a13}\\
a_{14}  &  =0. \label{a14}%
\end{align}
Now, let us give $T_{uv},$ $T_{vu}$ and the equations obtained by
$T_{uv}=T_{vu}$, and so on.

Using $T_{uv}=T_{vu}$,%
\begin{equation}
T_{uv}=\left(  -a_{14}\kappa_{n}\right)  T+\left(  -a_{24}\kappa_{n}\right)
E+\left(  -a_{34}\kappa_{n}\right)  D+\left(  \left(  \kappa_{n}\right)
_{v}\right)  N \label{F1uv}%
\end{equation}
and%
\begin{equation}
T_{vu}=\left(  -a_{14}\kappa_{n}\right)  T+\left(  \left(  a_{12}\right)
_{u}-a_{13}\kappa_{g}^{2}-a_{14}\tau_{g}^{1}\right)  E+\left(  a_{12}%
\kappa_{g}^{2}+\left(  a_{13}\right)  _{u}\right)  D+\left(  a_{12}\tau
_{g}^{1}+\left(  a_{14}\right)  _{u}\right)  N, \label{F1vu}%
\end{equation}
we have%
\begin{align}
&  -a_{24}\kappa_{n}=\left(  a_{12}\right)  _{u}-a_{13}\kappa_{g}^{2}%
-a_{14}\tau_{g}^{1},\label{F1uv2}\\
&  -a_{34}\kappa_{n}=a_{12}\kappa_{g}^{2}+\left(  a_{13}\right)
_{u},\label{F1uv3}\\
&  \left(  \kappa_{n}\right)  _{v}=a_{12}\tau_{g}^{1}+\left(  a_{14}\right)
_{u}. \label{F1uv4}%
\end{align}
Using (\ref{a12})-(\ref{a14}) in (\ref{F1uv2}) and (\ref{F1uv3}), we get%
\begin{equation}
a_{24}=2\kappa_{n}\kappa_{g}^{2}\left(  \tau_{g}^{1}\right)  _{u}+\tau_{g}%
^{1}\left(  \kappa_{n}\left(  \kappa_{g}^{2}\right)  _{u}+4\kappa_{g}%
^{2}\left(  \kappa_{n}\right)  _{u}\right)  \label{a24}%
\end{equation}
and%
\begin{equation}
a_{34}=\frac{1}{\kappa_{n}}\left(  \left(  \kappa_{n}\tau_{g}^{1}(\kappa
_{g}^{2})^{2}-4\left(  \kappa_{n}\right)  _{u}\left(  \tau_{g}^{1}\right)
_{u}-\kappa_{n}\left(  \tau_{g}^{1}\right)  _{uu}\right)  \kappa_{n}-2\left(
\left(  \left(  \kappa_{n}\right)  _{u}\right)  ^{2}+\kappa_{n}\left(
\kappa_{n}\right)  _{uu}\right)  \tau_{g}^{1}\right)  , \label{a34}%
\end{equation}
respectively.

If we use $E_{uv}=E_{vu}$,%
\begin{equation}
E_{uv}=\left(  -a_{14}\tau_{g}^{1}-a_{13}\kappa_{g}^{2}\right)  T+\left(
-a_{23}\kappa_{g}^{2}-a_{24}\tau_{g}^{1}\right)  E+\left(  \left(  \kappa
_{g}^{2}\right)  _{v}-a_{34}\tau_{g}^{1}\right)  D+\left(  a_{34}\kappa
_{g}^{2}+\left(  \tau_{g}^{1}\right)  _{v}\right)  N \label{F2uv}%
\end{equation}
and%
\begin{equation}
E_{vu}=\left(  -\left(  a_{12}\right)  _{u}-a_{24}\kappa_{n}\right)  T+\left(
-a_{23}\kappa_{g}^{2}-a_{24}\tau_{g}^{1}\right)  E+\left(  \left(
a_{23}\right)  _{u}\right)  D+\left(  -a_{12}\kappa_{n}+\left(  a_{24}\right)
_{u}\right)  N, \label{F2vu}%
\end{equation}
then we have%
\begin{align}
&  a_{14}\tau_{g}^{1}+a_{13}\kappa_{g}^{2}=\left(  a_{12}\right)  _{u}%
+a_{24}\kappa_{n},\label{F2uv1}\\
&  \left(  \kappa_{g}^{2}\right)  _{v}-a_{34}\tau_{g}^{1}=\left(
a_{23}\right)  _{u},\label{F2uv3}\\
&  a_{34}\kappa_{g}^{2}+\left(  \tau_{g}^{1}\right)  _{v}=-a_{12}\kappa
_{n}+\left(  a_{24}\right)  _{u}. \label{F2uv4}%
\end{align}
From $D_{uv}=D_{vu}$,%
\begin{equation}
D_{uv}=\left(  a_{12}\kappa_{g}^{2}\right)  T+\left(  -\left(  \kappa_{g}%
^{2}\right)  _{v}\right)  E+\left(  -a_{23}\kappa_{g}^{2}\right)  D+\left(
-a_{24}\kappa_{g}^{2}\right)  N \label{F3uv}%
\end{equation}
and%
\begin{equation}
D_{vu}=\left(  -\left(  a_{13}\right)  _{u}-a_{34}\kappa_{n}\right)  T+\left(
-\left(  a_{23}\right)  _{u}-a_{34}\tau_{g}^{1}\right)  E+\left(
-a_{23}\kappa_{g}^{2}\right)  D+\left(  -a_{13}\kappa_{n}-a_{23}\tau_{g}%
^{1}+\left(  a_{34}\right)  _{u}\right)  N, \label{F3vu}%
\end{equation}
we get%
\begin{align}
&  a_{12}\kappa_{g}^{2}=-\left(  a_{13}\right)  _{u}-a_{34}\kappa
_{n},\label{F3uv1}\\
&  \left(  \kappa_{g}^{2}\right)  _{v}=\left(  a_{23}\right)  _{u}+a_{34}%
\tau_{g}^{1},\label{F3uv2}\\
&  a_{24}\kappa_{g}^{2}=a_{13}\kappa_{n}+a_{23}\tau_{g}^{1}-\left(
a_{34}\right)  _{u}. \label{F3uv4}%
\end{align}
Using (\ref{a13}), (\ref{a24}) and (\ref{a34}) in (\ref{F3uv4}), we reach that%
\begin{equation}
a_{23}=\frac{-1}{\tau_{g}^{1}(\kappa_{n})^{2}}\left(
\begin{array}
[c]{l}%
-3(\kappa_{n})^{3}\tau_{g}^{1}\kappa_{g}^{2}\left(  \kappa_{g}^{2}\right)
_{u}+(\kappa_{n})^{5}\left(  \tau_{g}^{1}\right)  _{u}+2(\kappa_{n})^{4}%
\tau_{g}^{1}\left(  \kappa_{n}\right)  _{u}-2\tau_{g}^{1}\left(  \left(
\kappa_{n}\right)  _{u}\right)  ^{3}\\
-\left(  \kappa_{n}\kappa_{g}^{2}\right)  ^{2}\left(  3\kappa_{n}\left(
\tau_{g}^{1}\right)  _{u}+5\tau_{g}^{1}\left(  \kappa_{n}\right)  _{u}\right)
+2\kappa_{n}\left(  \kappa_{n}\right)  _{u}\left(  \left(  \kappa_{n}\right)
_{u}\left(  \tau_{g}^{1}\right)  _{u}+2\tau_{g}^{1}\left(  \kappa_{n}\right)
_{uu}\right) \\
+(\kappa_{n})^{3}\left(  \tau_{g}^{1}\right)  _{uuu}+(\kappa_{n})^{2}\left(
5\left(  \kappa_{n}\right)  _{u}\left(  \tau_{g}^{1}\right)  _{uu}+6\left(
\tau_{g}^{1}\right)  _{u}\left(  \kappa_{n}\right)  _{uu}+2\tau_{g}^{1}\left(
\kappa_{n}\right)  _{uuu}\right)
\end{array}
\right)  . \label{a23}%
\end{equation}
If we use%
\begin{equation}
N_{uv}=\left(  -\left(  \kappa_{n}\right)  _{v}+a_{12}\tau_{g}^{1}\right)
T+\left(  -a_{12}\kappa_{n}-\left(  \tau_{g}^{1}\right)  _{v}\right)
E+\left(  -a_{13}\kappa_{n}-a_{23}\tau_{g}^{1}\right)  D+\left(  -a_{14}%
\kappa_{n}-a_{24}\tau_{g}^{1}\right)  N \label{F4uv}%
\end{equation}
and%
\begin{equation}
N_{vu}=\left(  -\left(  a_{14}\right)  _{u}\right)  T+\left(  -\left(
a_{24}\right)  _{u}+a_{34}\kappa_{g}^{2}\right)  E+\left(  -\left(
a_{34}\right)  _{u}-a_{24}\kappa_{g}^{2}\right)  D+\left(  -a_{14}\kappa
_{n}-a_{24}\tau_{g}^{1}\right)  N \label{F4vu}%
\end{equation}
in $N_{uv}=N_{vu}$, then we get the equations (\ref{F1uv4}), (\ref{F2uv4}) and
(\ref{F3uv4}), again.

Hence, after the above calculations, we can give the following results:

\begin{theorem}
If the $u$-parameter curve $\Omega=\Omega(u,v)$ is unit speed for all $v$ and
$\Omega=\Omega(u,v)$ is a solution of the Betchov-Da Rios equation with
respect to the ED$^{2}$-frame field in $E^{4}$, then the derivative formulas
of the ED$^{2}$-frame field are%
\[
\left[
\begin{array}
[c]{c}%
T_{u}\\
E_{u}\\
D_{u}\\
N_{u}%
\end{array}
\right]  =\left[
\begin{array}
[c]{cccc}%
0 & 0 & 0 & \kappa_{n}\\
0 & 0 & \kappa_{g}^{2} & \tau_{g}^{1}\\
0 & -\kappa_{g}^{2} & 0 & 0\\
-\kappa_{n} & -\tau_{g}^{1} & 0 & 0
\end{array}
\right]  \left[
\begin{array}
[c]{c}%
T\\
E\\
D\\
N
\end{array}
\right]
\]
and%
\[
\left[
\begin{array}
[c]{c}%
T_{v}\\
E_{v}\\
D_{v}\\
N_{v}%
\end{array}
\right]  =\left[
\begin{array}
[c]{cccc}%
0 & a_{12} & a_{13} & a_{14}\\
-a_{12} & 0 & a_{23} & a_{24}\\
-a_{13} & -a_{23} & 0 & a_{34}\\
-a_{14} & -a_{24} & -a_{34} & 0
\end{array}
\right]  \left[
\begin{array}
[c]{c}%
T\\
E\\
D\\
N
\end{array}
\right]  ,
\]
where $a_{12},$ $a_{13},$ $a_{14},$ $a_{23},$ $a_{24}$ and $a_{34}$ are given
by (\ref{a12}), (\ref{a13}), (\ref{a14}), (\ref{a23}), (\ref{a24}) and
(\ref{a34}), respectively.
\end{theorem}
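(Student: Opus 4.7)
The first matrix in the theorem is immediate: the $u$-derivatives are exactly the ED$^{2}$-frame equations \eqref{darboux} restricted to the $u$-parameter curve, which are available because the $u$-parameter curve is assumed to be unit speed for every fixed $v$. So the only real work is to justify the second matrix, that is, to determine the six smooth coefficients $a_{12},a_{13},a_{14},a_{23},a_{24},a_{34}$.

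My plan is to exploit the skew-symmetric form of the connection matrix and then to use mixed-partial compatibility. The orthonormality of $\{T,E,D,N\}$ gives $a_{ii}=0$ and $a_{ij}=-a_{ji}$ after differentiating the six inner-product identities with respect to $v$, which is why the $v$-derivatives can be written in the skew-symmetric form of \eqref{Fiv}. This reduces the problem from sixteen to six unknown functions.

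Next, I would convert the Betchov-Da Rios equation into a frame identity. From $\Omega_u=T$ and \eqref{Fiu} I obtain expressions for $\Omega_{uu}$ and $\Omega_{uuu}$ in the ED$^{2}$-frame; substituting these into $\Omega_v=\Omega_u\times\Omega_{uu}\times\Omega_{uuu}$ and using the basic cross products among $T,E,D,N$ gives $\Omega_v=\kappa_n^{2}\tau_g^{1}D$. This is the core input that feeds all subsequent compatibility equations.

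Then I would impose the compatibility conditions $f_{uv}=f_{vu}$ for $f=\Omega,T,E,D,N$ one at a time. Each of these produces four scalar identities by equating coefficients in the ED$^{2}$-frame; the identity for $\Omega$ immediately yields $a_{12},a_{13},a_{14}$ (formulas \eqref{a12}--\eqref{a14}); the identity for $T$ then yields $a_{24}$ and $a_{34}$ (formulas \eqref{a24} and \eqref{a34}); and finally the $D$-compatibility identity, combined with the values of $a_{13},a_{24},a_{34}$, allows one to solve for $a_{23}$, giving \eqref{a23}. The remaining compatibility equations coming from $E$ and $N$ should then be shown to reduce to identities already used, confirming consistency. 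The main obstacle I anticipate is not conceptual but organizational: the formula for $a_{23}$ from \eqref{F3uv4} involves $(a_{34})_u$, so one must differentiate the already-lengthy expression \eqref{a34} and track cancellations between terms of the form $\kappa_n^{j}\tau_g^{1}$ and their derivatives; once this bookkeeping is handled carefully the asserted expression drops out.
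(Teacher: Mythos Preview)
Your proposal is correct and follows essentially the same route as the paper: reduce to a skew-symmetric $v$-connection matrix via orthonormality, compute $\Omega_v=\kappa_n^{2}\tau_g^{1}D$ from the Betchov--Da Rios equation, then read off $a_{12},a_{13},a_{14}$ from $\Omega_{uv}=\Omega_{vu}$, obtain $a_{24},a_{34}$ from $T_{uv}=T_{vu}$, and finally extract $a_{23}$ from the $D$-compatibility relation \eqref{F3uv4} (which indeed requires differentiating $a_{34}$). The paper proceeds identically, and your remark that the remaining $E$- and $N$-compatibility relations are redundant matches the paper's observation that $N_{uv}=N_{vu}$ reproduces equations already obtained.
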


\begin{corollary}
If the $u$-parameter curve $\Omega=\Omega(u,v)$ is unit speed for all $v$ and
$\Omega=\Omega(u,v)$ is a solution of the Betchov-Da Rios equation with
respect to the ED$^{2}$-frame field in $E^{4}$, then we get the following
equations:%
\begin{equation}
(\kappa_{n}\tau_{g}^{1})^{2}\kappa_{g}^{2}+\left(  \kappa_{n}\right)  _{v}=0,
\label{sari1}%
\end{equation}%
\begin{align}
&  \left(  \kappa_{n}\tau_{g}^{1}(\kappa_{g}^{2})^{2}-10\left(  \kappa
_{n}\right)  _{u}\left(  \tau_{g}^{1}\right)  _{u}-3\kappa_{n}\left(  \tau
_{g}^{1}\right)  _{uu}\right)  \kappa_{n}\kappa_{g}^{2}-\left(  2(\left(
\kappa_{n}\right)  _{u})^{2}+\kappa_{n}((\kappa_{n})^{3}+6\left(  \kappa
_{n}\right)  _{uu})\right)  \tau_{g}^{1}\kappa_{g}^{2}\nonumber\\
\text{ }  &  +\left(  \left(  \tau_{g}^{1}\right)  _{v}-\left(  3\kappa
_{n}\left(  \tau_{g}^{1}\right)  _{u}+5\tau_{g}^{1}\left(  \kappa_{n}\right)
_{u}\right)  \left(  \kappa_{g}^{2}\right)  _{u}-\kappa_{n}\tau_{g}^{1}\left(
\kappa_{g}^{2}\right)  _{uu}\right)  \kappa_{n}=0 \label{sari3}%
\end{align}
and%
\begin{align}
&  (\kappa_{n})^{3}\tau_{g}^{1}\kappa_{g}^{2}\left(  \left(  \kappa_{g}%
^{2}\right)  _{u}\left(  6\kappa_{n}\left(  \tau_{g}^{1}\right)  _{u}%
+13\tau_{g}^{1}\left(  \kappa_{n}\right)  _{u}\right)  +3\kappa_{n}\tau
_{g}^{1}\left(  \kappa_{g}^{2}\right)  _{uu}\right) \nonumber\\
&  -(\kappa_{n}\tau_{g}^{1})^{3}\left(  4\left(  \kappa_{n}\right)
_{u}\left(  \tau_{g}^{1}\right)  _{u}+\kappa_{n}\left(  \tau_{g}^{1}\right)
_{uu}\right)  -2(\kappa_{n})^{2}(\tau_{g}^{1})^{4}\left(  \left(  \left(
\kappa_{n}\right)  _{u}\right)  ^{2}+\kappa_{n}\left(  \kappa_{n}\right)
_{uu}\right) \nonumber\\
&  +(\kappa_{n})^{3}(\kappa_{g}^{2})^{2}\left(  \kappa_{n}(\tau_{g}^{1}%
)^{4}-3\kappa_{n}\left(  \left(  \tau_{g}^{1}\right)  _{u}\right)
^{2}+3\left(  \left(  \kappa_{n}\right)  _{u}\left(  \tau_{g}^{1}\right)
_{u}+\kappa_{n}\left(  \tau_{g}^{1}\right)  _{uu}\right)  \tau_{g}%
^{1}+5\left(  \tau_{g}^{1}\right)  ^{2}\left(  \kappa_{n}\right)  _{uu}\right)
\nonumber\\
&  +(\kappa_{n})^{2}\left(  \tau_{g}^{1}\right)  _{u}\left(  (\kappa_{n}%
)^{4}\left(  \tau_{g}^{1}\right)  _{u}+2\left(  \left(  \kappa_{n}\right)
_{u}\right)  ^{2}\left(  \tau_{g}^{1}\right)  _{u}+\kappa_{n}\left(  5\left(
\kappa_{n}\right)  _{u}\left(  \tau_{g}^{1}\right)  _{uu}+6\left(  \tau
_{g}^{1}\right)  _{u}\left(  \kappa_{n}\right)  _{uu}\right)  +(\kappa
_{n})^{2}\left(  \tau_{g}^{1}\right)  _{uuu}\right) \nonumber\\
&  -\kappa_{n}\tau_{g}^{1}\left(
\begin{array}
[c]{l}%
3(\kappa_{n})^{4}\left(  \kappa_{n}\right)  _{u}\left(  \tau_{g}^{1}\right)
_{u}-2\left(  \left(  \kappa_{n}\right)  _{u}\right)  ^{3}\left(  \tau_{g}%
^{1}\right)  _{u}+2\kappa_{n}\left(  \kappa_{n}\right)  _{u}\left(  \left(
\kappa_{n}\right)  _{u}\left(  \tau_{g}^{1}\right)  _{uu}+2\left(  \tau
_{g}^{1}\right)  _{u}\left(  \kappa_{n}\right)  _{uu}\right) \\
+(\kappa_{n})^{2}\left(  11\left(  \kappa_{n}\right)  _{uu}\left(  \tau
_{g}^{1}\right)  _{uu}+6\left(  \kappa_{n}\right)  _{u}\left(  \tau_{g}%
^{1}\right)  _{uuu}+6\left(  \tau_{g}^{1}\right)  _{u}\left(  \kappa
_{n}\right)  _{uuu}\right)  +(\kappa_{n})^{5}\left(  \tau_{g}^{1}\right)
_{uu}+(\kappa_{n})^{3}\left(  \tau_{g}^{1}\right)  _{uuuu}%
\end{array}
\right) \nonumber\\
&  \text{ }-(\tau_{g}^{1})^{2}\left(
\begin{array}
[c]{l}%
4\left(  \left(  \kappa_{n}\right)  _{u}\right)  ^{4}+(\kappa_{n})^{4}\left(
4\left(  \left(  \kappa_{n}\right)  _{u}\right)  ^{2}-3\left(  \left(
\kappa_{g}^{2}\right)  _{u}\right)  ^{2}\right)  +2(\kappa_{n})^{5}\left(
\kappa_{n}\right)  _{uu}-10\kappa_{n}\left(  \left(  \kappa_{n}\right)
_{u}\right)  ^{2}\left(  \kappa_{n}\right)  _{uu}\\
+4(\kappa_{n})^{2}\left(  \left(  \left(  \kappa_{n}\right)  _{uu}\right)
^{2}+\left(  \kappa_{n}\right)  _{u}\left(  \kappa_{n}\right)  _{uuu}\right)
+(\kappa_{n})^{3}\left(  \left(  \kappa_{g}^{2}\right)  _{v}+2\left(
\kappa_{n}\right)  _{uuuu}\right)
\end{array}
\right)  =0. \label{sari2}%
\end{align}

\end{corollary}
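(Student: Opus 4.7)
The plan is to read off the three identities as the \emph{residual} content of the mixed--partial compatibility conditions, i.e.\ exactly those scalar equations of the form $F_{uv}=F_{vu}$ (applied to $\Omega$, $T$, $E$, $D$, $N$) that were \emph{not} consumed in determining the six coefficients $a_{12},a_{13},a_{14},a_{23},a_{24},a_{34}$ in the theorem. All that remains is to substitute the closed--form expressions (\ref{a12})--(\ref{a14}), (\ref{a24}), (\ref{a34}) and (\ref{a23}) into these leftover scalar equations and simplify.

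More concretely, equation (\ref{sari1}) will be obtained from the $N$--component of $T_{uv}=T_{vu}$, which is precisely (\ref{F1uv4}): $(\kappa_n)_v=a_{12}\tau_g^1+(a_{14})_u$. Plugging in $a_{14}=0$ from (\ref{a14}) and $a_{12}=-(\kappa_n)^2\tau_g^1\kappa_g^2$ from (\ref{a12}) yields $(\kappa_n)_v+(\kappa_n\tau_g^1)^2\kappa_g^2=0$ immediately. Next, equation (\ref{sari3}) comes from the $N$--component of $E_{uv}=E_{vu}$, i.e.\ (\ref{F2uv4}): $a_{34}\kappa_g^2+(\tau_g^1)_v=-a_{12}\kappa_n+(a_{24})_u$. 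Here I would substitute $a_{12}$ from (\ref{a12}), $a_{24}$ from (\ref{a24}) (and then differentiate with respect to $u$), and $a_{34}$ from (\ref{a34}), then collect terms by the multiplicative factors $\kappa_n\kappa_g^2$, $\tau_g^1\kappa_g^2$ and $\kappa_n$ as displayed in (\ref{sari3}).

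Finally, (\ref{sari2}) will be the content of the $D$--component of $D_{uv}=D_{vu}$, namely (\ref{F3uv2}): $(\kappa_g^2)_v=(a_{23})_u+a_{34}\tau_g^1$. This is the calculation I expect to be the real obstacle: $a_{23}$ is itself a rational expression in $\kappa_n,\tau_g^1,\kappa_g^2$ and their $u$--derivatives up to order three (cf.\ (\ref{a23})), so differentiating it with respect to $u$ produces terms involving up to $(\kappa_n)_{uuuu}$ and $(\tau_g^1)_{uuuu}$, matching exactly the highest--order terms that appear in (\ref{sari2}). After multiplying through by $\tau_g^1(\kappa_n)^2$ to clear the denominator inherited from $a_{23}$, and using the product rule carefully on each summand, the identity follows by term--by--term comparison with the displayed polynomial in (\ref{sari2}).

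Finally I would note that the remaining compatibility scalars, namely the $T$-- and $E$--components of $E_{uv}=E_{vu}$, $D_{uv}=D_{vu}$ and $N_{uv}=N_{vu}$, were either already used to \emph{define} the $a_{ij}$'s or collapse to identities already contained in (\ref{F1uv4}), (\ref{F2uv4}), (\ref{F3uv2}) (as the author remarks immediately after (\ref{F4vu})), so no further equations arise. Thus (\ref{sari1}), (\ref{sari3}) and (\ref{sari2}) together exhaust the compatibility content and the corollary is established. The main difficulty is strictly bookkeeping: keeping track of the order--four $u$--derivatives when expanding $(a_{23})_u$ and verifying that the coefficients of each monomial in the derivatives of $\kappa_n$, $\tau_g^1$, $\kappa_g^2$ match those in (\ref{sari2}).
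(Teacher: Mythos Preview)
Your approach is correct and coincides with the paper's: (\ref{sari1}) from (\ref{F1uv4}) with (\ref{a12}), (\ref{a14}); (\ref{sari3}) from (\ref{F2uv4}) with (\ref{a12}), (\ref{a24}), (\ref{a34}) (the paper phrases this as equating two expressions for $a_{34}$, which is the same thing); and (\ref{sari2}) from (\ref{F2uv3})$\,=\,$(\ref{F3uv2}) with (\ref{a23}), (\ref{a34}). One small slip: the equation (\ref{F3uv2}) you cite is the $E$--component of $D_{uv}=D_{vu}$, not the $D$--component (which is trivially satisfied), but since you reference the correct equation this does not affect the argument.
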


\begin{proof}
Firstly, from (\ref{a14}) and (\ref{F1uv4}), we have $a_{12}=\frac{\left(
\kappa_{n}\right)  _{v}}{\tau_{g}^{1}}$. Using (\ref{a12}) in the last
equation, we get (\ref{sari1}).

Now, from (\ref{a12}), (\ref{a24}) and (\ref{F2uv4}), we have an alternative
equation for $a_{34}$ as%
\begin{equation}
a_{34}=\frac{1}{\kappa_{g}^{2}}\left(
\begin{array}
[c]{l}%
-\left(  \tau_{g}^{1}\right)  _{v}+\left(  3\kappa_{n}\left(  \tau_{g}%
^{1}\right)  _{u}+5\tau_{g}^{1}\left(  \kappa_{n}\right)  _{u}\right)  \left(
\kappa_{g}^{2}\right)  _{u}+\kappa_{n}\tau_{g}^{1}\left(  \kappa_{g}%
^{2}\right)  _{uu}\\
+\left(  6\left(  \kappa_{n}\right)  _{u}\left(  \tau_{g}^{1}\right)
_{u}+2\kappa_{n}\left(  \tau_{g}^{1}\right)  _{uu}+\left(  (\kappa_{n}%
)^{3}+4\left(  \kappa_{n}\right)  _{uu}\right)  \tau_{g}^{1}\right)
\kappa_{g}^{2}%
\end{array}
\right)  ,\label{a34y}%
\end{equation}
where $\kappa_{g}^{2}\neq0$. Thus, (\ref{sari3}) can be obtained by equalizing
(\ref{a34}) and (\ref{a34y}).

Finally, using (\ref{a34}) and (\ref{a23}) in (\ref{F2uv3}), we obtain
(\ref{sari2}).
\end{proof}

\section{\textbf{Geometric Characterizations for Betchov-Da Rios soliton
surface with respect to the ED}$^{2}$\textbf{-frame field in }$E^{4}$}

In this section, we obtain two invariants $k$ and $h$ introduced in
\cite{Ganchev} of a two-dimensional Betchov-Da Rios soliton surface
$S:\Omega=\Omega(u,v)$ with respect to the ED$^{2}$-frame field in $E^{4}$.
Additionally, we provide some characterizations for this surface by obtaining
its Gaussian curvature, mean curvature vector field and Gaussian torsion.

Firstly, we obtain the coefficients of the first fundamental form as%

\begin{equation}
\left.
\begin{array}
[c]{l}%
g_{11}=\left\langle \Omega_{u},\Omega_{u}\right\rangle =1,\\
g_{12}=g_{21}=\left\langle \Omega_{u},\Omega_{v}\right\rangle =0,\\
g_{22}=\left\langle \Omega_{v},\Omega_{v}\right\rangle =(\kappa_{n}%
)^{4}\left(  \tau_{g}^{1}\right)  ^{2}%
\end{array}
\right\}  \label{gij}%
\end{equation}
and from (\ref{gij}), let us set%
\begin{equation}
\mathcal{W=}\sqrt{g_{11}g_{22}-(g_{12})^{2}}=(\kappa_{n})^{2}\tau_{g}^{1}.
\label{W}%
\end{equation}

If $\Gamma_{ij}^{k}$ $(i,j,k=1,2)$ are the Christoffel's symbols and
$c_{ij}^{k}$ are functions on $S$, then we have the following standard
derivative formulas for the orthonormal normal frame field $\{E,N\}$ of $S$:%
\begin{equation}
\left.
\begin{array}
[c]{l}%
\Omega_{uu}=\Gamma_{11}^{1}\Omega_{u}+\Gamma_{11}^{2}\Omega_{v}+c_{11}%
^{1}E+c_{11}^{2}N,\\
\Omega_{uv}=\Gamma_{12}^{1}\Omega_{u}+\Gamma_{12}^{2}\Omega_{v}+c_{12}%
^{1}E+c_{12}^{2}N,\\
\Omega_{vv}=\Gamma_{22}^{1}\Omega_{u}+\Gamma_{22}^{2}\Omega_{v}+c_{22}%
^{1}E+c_{22}^{2}N.
\end{array}
\right\}  \label{omegaij}%
\end{equation}
On the other hand, from (\ref{Fiv}), (\ref{omegav}), (\ref{a13}), (\ref{a34})
and (\ref{a23}) we have%
\begin{align}
&  \Omega_{vv}=\left(  -(\kappa_{n})^{3}\tau_{g}^{1}\left(  \kappa_{n}\left(
\tau_{g}^{1}\right)  _{u}+2\tau_{g}^{1}\left(  \kappa_{n}\right)  _{u}\right)
\right)  T\nonumber\\
&  +\left(  (\kappa_{n})^{2}\tau_{g}^{1}\left(
\begin{array}
[c]{l}%
-3\kappa_{n}\kappa_{g}^{2}\left(  \kappa_{g}^{2}\right)  _{u}-\frac{\left(
\kappa_{g}^{2}\right)  ^{2}}{\tau_{g}^{1}}\left(  3\kappa_{n}\left(  \tau
_{g}^{1}\right)  _{u}+5\tau_{g}^{1}\left(  \kappa_{n}\right)  _{u}\right)
+2\left(  \kappa_{n}\right)  _{uuu}\\
+\frac{1}{(\kappa_{n})^{2}\tau_{g}^{1}}\left(
\begin{array}
[c]{l}%
(\kappa_{n})^{5}\left(  \tau_{g}^{1}\right)  _{u}+2(\kappa_{n})^{4}\tau
_{g}^{1}\left(  \kappa_{n}\right)  _{u}-2\tau_{g}^{1}(\left(  \kappa
_{n}\right)  _{u})^{3}\\
+2\kappa_{n}\left(  \kappa_{n}\right)  _{u}\left(  \left(  \kappa_{n}\right)
_{u}\left(  \tau_{g}^{1}\right)  _{u}+2\tau_{g}^{1}\left(  \kappa_{n}\right)
_{uu}\right)  +(\kappa_{n})^{3}\left(  \tau_{g}^{1}\right)  _{uuu}\\
+(\kappa_{n})^{2}\left(  5\left(  \kappa_{n}\right)  _{u}\left(  \tau_{g}%
^{1}\right)  _{uu}+6\left(  \tau_{g}^{1}\right)  _{u}\left(  \kappa
_{n}\right)  _{uu}\right)
\end{array}
\right)
\end{array}
\right)  \right)  E\nonumber\\
&  +\left(  \kappa_{n}\left(  \kappa_{n}\left(  \tau_{g}^{1}\right)
_{v}+2\tau_{g}^{1}\left(  \kappa_{n}\right)  _{v}\right)  \right)
D\nonumber\\
&  +\left(  (\kappa_{n})^{2}\tau_{g}^{1}\left(  \kappa_{n}\tau_{g}^{1}\left(
\kappa_{g}^{2}\right)  ^{2}-4\left(  \kappa_{n}\right)  _{u}\left(  \tau
_{g}^{1}\right)  _{u}-\kappa_{n}\left(  \tau_{g}^{1}\right)  _{uu}-\frac
{2\tau_{g}^{1}}{\kappa_{n}}\left(  (\left(  \kappa_{n}\right)  _{u}%
)^{2}+\kappa_{n}\left(  \kappa_{n}\right)  _{uu}\right)  \right)  \right)
N.\label{omegavvy}%
\end{align}
Thus, from (\ref{omegauu}), (\ref{omegavu}) and (\ref{omegavvy}), we get%
\begin{equation}
\left.
\begin{array}
[c]{l}%
c_{11}^{1}=\left\langle \Omega_{uu},E\right\rangle =0,\\
c_{11}^{2}=\left\langle \Omega_{uu},N\right\rangle =\kappa_{n},\\
c_{12}^{1}=\left\langle \Omega_{uv},E\right\rangle =-(\kappa_{n})^{2}\tau
_{g}^{1}\kappa_{g}^{2},\\
c_{12}^{2}=\left\langle \Omega_{uv},N\right\rangle =0,\\
c_{22}^{1}=\left\langle \Omega_{vv},E\right\rangle =(\kappa_{n})^{2}\tau
_{g}^{1}\left(
\begin{array}
[c]{c}%
-3\kappa_{n}\kappa_{g}^{2}\left(  \kappa_{g}^{2}\right)  _{u}-\frac{\left(
\kappa_{g}^{2}\right)  ^{2}}{\tau_{g}^{1}}\left(  3\kappa_{n}\left(  \tau
_{g}^{1}\right)  _{u}+5\tau_{g}^{1}\left(  \kappa_{n}\right)  _{u}\right)
+2\left(  \kappa_{n}\right)  _{uuu}\\
+\frac{1}{(\kappa_{n})^{2}\tau_{g}^{1}}\left(
\begin{array}
[c]{l}%
(\kappa_{n})^{5}\left(  \tau_{g}^{1}\right)  _{u}+2(\kappa_{n})^{4}\tau
_{g}^{1}\left(  \kappa_{n}\right)  _{u}-2\tau_{g}^{1}(\left(  \kappa
_{n}\right)  _{u})^{3}\\
+2\kappa_{n}\left(  \kappa_{n}\right)  _{u}\left(  \left(  \kappa_{n}\right)
_{u}\left(  \tau_{g}^{1}\right)  _{u}+2\tau_{g}^{1}\left(  \kappa_{n}\right)
_{uu}\right)  \\
+(\kappa_{n})^{2}\left(  5\left(  \kappa_{n}\right)  _{u}\left(  \tau_{g}%
^{1}\right)  _{uu}+6\left(  \tau_{g}^{1}\right)  _{u}\left(  \kappa
_{n}\right)  _{uu}\right)  \\
+(\kappa_{n})^{3}\left(  \tau_{g}^{1}\right)  _{uuu}%
\end{array}
\right)
\end{array}
\right)  ,\\
c_{22}^{2}=\left\langle \Omega_{vv},N\right\rangle =(\kappa_{n})^{2}\tau
_{g}^{1}\left(
\begin{array}
[c]{l}%
\kappa_{n}\tau_{g}^{1}\left(  \kappa_{g}^{2}\right)  ^{2}-4\left(  \kappa
_{n}\right)  _{u}\left(  \tau_{g}^{1}\right)  _{u}-\kappa_{n}\left(  \tau
_{g}^{1}\right)  _{uu}\\
-\frac{2\tau_{g}^{1}}{\kappa_{n}}\left(  (\left(  \kappa_{n}\right)  _{u}%
)^{2}+\kappa_{n}\left(  \kappa_{n}\right)  _{uu}\right)
\end{array}
\right)  .
\end{array}
\right\}  \label{cijk}%
\end{equation}
If we introduce the following functions%
\begin{equation}
\left.
\begin{array}
[c]{l}%
\Delta_{1}=\left\vert
\begin{array}
[c]{cc}%
c_{11}^{1} & c_{12}^{1}\\
c_{11}^{2} & c_{12}^{2}%
\end{array}
\right\vert =(\kappa_{n})^{3}\tau_{g}^{1}\kappa_{g}^{2},\\
\Delta_{2}=\left\vert
\begin{array}
[c]{cc}%
c_{11}^{1} & c_{22}^{1}\\
c_{11}^{2} & c_{22}^{2}%
\end{array}
\right\vert =-\kappa_{n}\left(
\begin{array}
[c]{l}%
{\small -3(\kappa}_{n}{\small )}^{3}{\small \tau}_{g}^{1}{\small \kappa}%
_{g}^{2}\left(  \kappa_{g}^{2}\right)  _{u}{\small +(\kappa}_{n}{\small )}%
^{5}\left(  \tau_{g}^{1}\right)  _{u}{\small +2(\kappa}_{n}{\small )}%
^{4}{\small \tau}_{g}^{1}\left(  \kappa_{n}\right)  _{u}\\
{\small -2\tau_{g}^{1}(}\left(  \kappa_{n}\right)  _{u}{\small )}%
^{3}{\small -(\kappa}_{n}{\small \kappa}_{g}^{2}{\small )}^{2}\left(
{\small 3\kappa}_{n}\left(  \tau_{g}^{1}\right)  _{u}{\small +5\tau}_{g}%
^{1}\left(  \kappa_{n}\right)  _{u}\right)  \\
{\small +2\kappa}_{n}\left(  \kappa_{n}\right)  _{u}\left(  \left(  \kappa
_{n}\right)  _{u}\left(  \tau_{g}^{1}\right)  _{u}{\small +2\tau}_{g}%
^{1}\left(  \kappa_{n}\right)  _{uu}\right)  {\small +(\kappa}_{n}%
{\small )}^{3}\left(  \tau_{g}^{1}\right)  _{uuu}\\
{\small +(\kappa}_{n}{\small )}^{2}\left(  {\small 5}\left(  \kappa
_{n}\right)  _{u}\left(  \tau_{g}^{1}\right)  _{uu}{\small +6}\left(  \tau
_{g}^{1}\right)  _{u}\left(  \kappa_{n}\right)  _{uu}{\small +2\tau}_{g}%
^{1}\left(  \kappa_{n}\right)  _{uuu}\right)
\end{array}
\right)  ,\\
\Delta_{3}=\left\vert
\begin{array}
[c]{cc}%
c_{12}^{1} & c_{22}^{1}\\
c_{12}^{2} & c_{22}^{2}%
\end{array}
\right\vert =(\kappa_{n})^{3}(\tau_{g}^{1})^{2}\kappa_{g}^{2}\left(
\begin{array}
[c]{l}%
{\small -(\kappa}_{n}{\small )}^{2}{\small \tau}_{g}^{1}{\small (\kappa}%
_{g}^{2}{\small )}^{2}{\small +2\tau_{g}^{1}(}\left(  \kappa_{n}\right)
_{u}{\small )}^{2}{\small +(\kappa}_{n}{\small )}^{2}\left(  \tau_{g}%
^{1}\right)  _{uu}\\
{\small +2\kappa}_{n}\left(  {\small 2}\left(  \kappa_{n}\right)  _{u}\left(
\tau_{g}^{1}\right)  _{u}{\small +\tau}_{g}^{1}\left(  \kappa_{n}\right)
_{uu}\right)
\end{array}
\right)  ,
\end{array}
\right\}  \label{delta123}%
\end{equation}
then we find the coefficients of the second fundamental form as%
\begin{equation}
\left.
\begin{array}
[c]{l}%
l_{11}=\frac{2\Delta_{1}}{\mathcal{W}}=2\kappa_{n}\kappa_{g}^{2},\\
l_{12}=\frac{\Delta_{2}}{\mathcal{W}}=\frac{-1}{\kappa_{n}\tau_{g}^{1}}\left(
\begin{array}
[c]{l}%
{\small -3(\kappa}_{n}{\small )}^{3}{\small \tau}_{g}^{1}{\small \kappa}%
_{g}^{2}\left(  \kappa_{g}^{2}\right)  _{u}{\small +(\kappa}_{n}{\small )}%
^{5}\left(  \tau_{g}^{1}\right)  _{u}{\small +2(\kappa}_{n}{\small )}%
^{4}{\small \tau}_{g}^{1}\left(  \kappa_{n}\right)  _{u}\\
{\small -2\tau_{g}^{1}(}\left(  \kappa_{n}\right)  _{u}{\small )}%
^{3}{\small -(\kappa}_{n}{\small \kappa}_{g}^{2}{\small )}^{2}\left(
{\small 3\kappa}_{n}\left(  \tau_{g}^{1}\right)  _{u}{\small +5\tau}_{g}%
^{1}\left(  \kappa_{n}\right)  _{u}\right)  \\
{\small +2\kappa}_{n}\left(  \kappa_{n}\right)  _{u}\left(  \left(  \kappa
_{n}\right)  _{u}\left(  \tau_{g}^{1}\right)  _{u}{\small +2\tau}_{g}%
^{1}\left(  \kappa_{n}\right)  _{uu}\right)  {\small +(\kappa}_{n}%
{\small )}^{3}\left(  \tau_{g}^{1}\right)  _{uuu}\\
{\small +(\kappa}_{n}{\small )}^{2}\left(  {\small 5}\left(  \kappa
_{n}\right)  _{u}\left(  \tau_{g}^{1}\right)  _{uu}{\small +6}\left(  \tau
_{g}^{1}\right)  _{u}\left(  \kappa_{n}\right)  _{uu}{\small +2\tau}_{g}%
^{1}\left(  \kappa_{n}\right)  _{uuu}\right)
\end{array}
\right)  ,\\
l_{22}=\frac{2\Delta_{3}}{\mathcal{W}}=2\kappa_{n}\tau_{g}^{1}\kappa_{g}%
^{2}\left(
\begin{array}
[c]{l}%
{\small -(\kappa}_{n}{\small )}^{2}{\small \tau}_{g}^{1}{\small (\kappa}%
_{g}^{2}{\small )}^{2}{\small +2\tau_{g}^{1}(}\left(  \kappa_{n}\right)
_{u}{\small )}^{2}{\small +(\kappa}_{n}{\small )}^{2}\left(  \tau_{g}%
^{1}\right)  _{uu}\\
{\small +2\kappa}_{n}\left(  {\small 2}\left(  \kappa_{n}\right)  _{u}\left(
\tau_{g}^{1}\right)  _{u}{\small +\tau}_{g}^{1}\left(  \kappa_{n}\right)
_{uu}\right)
\end{array}
\right)  .
\end{array}
\right\}  \label{lij}%
\end{equation}

Furthermore, if we consider the linear map
\[
\gamma:T_{p}S\longrightarrow T_{p}S
\]
which satisfies the conditions%
\[
\left.
\begin{array}
[c]{c}%
\gamma(\Omega_{u})=\gamma_{1}^{1}\Omega_{u}+\gamma_{1}^{2}\Omega_{v},\\
\gamma(\Omega_{v})=\gamma_{2}^{1}\Omega_{u}+\gamma_{2}^{2}\Omega_{v},
\end{array}
\right\}  \text{ \ \ }\left(  \gamma=\left[
\begin{array}
[c]{cc}%
\gamma_{1}^{1} & \gamma_{1}^{2}\\
\gamma_{2}^{1} & \gamma_{2}^{2}%
\end{array}
\right]  \right)  ,
\]
then we obtain that%
\begin{equation}
\left.
\begin{array}
[c]{l}%
\gamma_{1}^{1}=\frac{g_{12}l_{12}-g_{22}l_{11}}{g_{11}g_{22}-(g_{12})^{2}%
}=-2\kappa_{n}\kappa_{g}^{2},\\
\gamma_{1}^{2}=\frac{g_{12}l_{11}-g_{11}l_{12}}{g_{11}g_{22}-(g_{12})^{2}%
}=\frac{1}{(\kappa_{n})^{5}(\tau_{g}^{1})^{3}}\left(
\begin{array}
[c]{l}%
{\small -3(\kappa}_{n}{\small )}^{3}{\small \tau}_{g}^{1}{\small \kappa}%
_{g}^{2}\left(  \kappa_{g}^{2}\right)  _{u}{\small +(\kappa}_{n}{\small )}%
^{5}\left(  \tau_{g}^{1}\right)  _{u}{\small +2(\kappa}_{n}{\small )}%
^{4}{\small \tau}_{g}^{1}\left(  \kappa_{n}\right)  _{u}\\
{\small -2\tau_{g}^{1}(}\left(  \kappa_{n}\right)  _{u}{\small )}%
^{3}{\small -(\kappa}_{n}{\small \kappa}_{g}^{2}{\small )}^{2}\left(
{\small 3\kappa}_{n}\left(  \tau_{g}^{1}\right)  _{u}{\small +5\tau}_{g}%
^{1}\left(  \kappa_{n}\right)  _{u}\right) \\
{\small +2\kappa}_{n}\left(  \kappa_{n}\right)  _{u}\left(  \left(  \kappa
_{n}\right)  _{u}\left(  \tau_{g}^{1}\right)  _{u}{\small +2\tau}_{g}%
^{1}\left(  \kappa_{n}\right)  _{uu}\right)  {\small +(\kappa}_{n}%
{\small )}^{3}\left(  \tau_{g}^{1}\right)  _{uuu}\\
{\small +(\kappa}_{n}{\small )}^{2}\left(  {\small 5}\left(  \kappa
_{n}\right)  _{u}\left(  \tau_{g}^{1}\right)  _{uu}{\small +6}\left(  \tau
_{g}^{1}\right)  _{u}\left(  \kappa_{n}\right)  _{uu}{\small +2\tau}_{g}%
^{1}\left(  \kappa_{n}\right)  _{uuu}\right)
\end{array}
\right)  ,\\
\gamma_{2}^{1}=\frac{g_{12}l_{22}-g_{22}l_{12}}{g_{11}g_{22}-(g_{12})^{2}%
}=\frac{1}{\kappa_{n}\tau_{g}^{1}}\left(
\begin{array}
[c]{l}%
{\small -3(\kappa}_{n}{\small )}^{3}{\small \tau}_{g}^{1}{\small \kappa}%
_{g}^{2}\left(  \kappa_{g}^{2}\right)  _{u}{\small +(\kappa}_{n}{\small )}%
^{5}\left(  \tau_{g}^{1}\right)  _{u}{\small +2(\kappa}_{n}{\small )}%
^{4}{\small \tau}_{g}^{1}\left(  \kappa_{n}\right)  _{u}\\
{\small -2\tau_{g}^{1}(}\left(  \kappa_{n}\right)  _{u}{\small )}%
^{3}{\small -(\kappa}_{n}{\small \kappa}_{g}^{2}{\small )}^{2}\left(
{\small 3\kappa}_{n}\left(  \tau_{g}^{1}\right)  _{u}{\small +5\tau}_{g}%
^{1}\left(  \kappa_{n}\right)  _{u}\right) \\
{\small +2\kappa}_{n}\left(  \kappa_{n}\right)  _{u}\left(  \left(  \kappa
_{n}\right)  _{u}\left(  \tau_{g}^{1}\right)  _{u}{\small +2\tau}_{g}%
^{1}\left(  \kappa_{n}\right)  _{uu}\right)  {\small +(\kappa}_{n}%
{\small )}^{3}\left(  \tau_{g}^{1}\right)  _{uuu}\\
{\small +(\kappa}_{n}{\small )}^{2}\left(  {\small 5}\left(  \kappa
_{n}\right)  _{u}\left(  \tau_{g}^{1}\right)  _{uu}{\small +6}\left(  \tau
_{g}^{1}\right)  _{u}\left(  \kappa_{n}\right)  _{uu}{\small +2\tau}_{g}%
^{1}\left(  \kappa_{n}\right)  _{uuu}\right)
\end{array}
\right)  ,\\
\gamma_{2}^{2}=\frac{g_{12}l_{12}-g_{11}l_{22}}{g_{11}g_{22}-(g_{12})^{2}%
}=\frac{-2\kappa_{g}^{2}}{(\kappa_{n})^{3}\tau_{g}^{1}}\left(
\begin{array}
[c]{l}%
{\small -(\kappa}_{n}{\small )}^{2}{\small \tau}_{g}^{1}{\small (\kappa}%
_{g}^{2}{\small )}^{2}{\small +2\tau_{g}^{1}(}\left(  \kappa_{n}\right)
_{u}{\small )}^{2}{\small +(\kappa}_{n}{\small )}^{2}\left(  \tau_{g}%
^{1}\right)  _{uu}\\
{\small +2\kappa}_{n}\left(  {\small 2}\left(  \kappa_{n}\right)  _{u}\left(
\tau_{g}^{1}\right)  _{u}{\small +\tau}_{g}^{1}\left(  \kappa_{n}\right)
_{uu}\right)
\end{array}
\right)  .
\end{array}
\right\}  \label{gammaij}%
\end{equation}
Hence, we can give the following theorem:

\begin{theorem}
If $\Omega=\Omega(u,v)$ is a solution of the Betchov-Da Rios equation, then
\begin{align}
&  k(u,v)=\label{kuv}\\
&  \frac{-1}{(\kappa_{n})^{6}(\tau_{g}^{1})^{4}}\left(
\begin{array}
[c]{l}%
{\small 6(\kappa}_{n}{\small )}^{5}{\small \tau}_{g}^{1}{\small (\kappa}%
_{g}^{2}{\small )}^{3}{\small (\kappa}_{g}^{2}{\small )}_{u}\left(
3\kappa_{n}(\tau_{g}^{1})_{u}+5\tau_{g}^{1}(\kappa_{n})_{u}\right) \\
{\small +(\kappa}_{n}{\small \kappa}_{g}^{2}{\small )}^{4}\left(
{\small 4(\kappa}_{n}{\small )}^{2}{\small (\tau}_{g}^{1}{\small )}%
^{4}{\small +9(\kappa}_{n}(\tau_{g}^{1})_{u}{\small )}^{2}{\small +30\kappa
}_{n}{\small \tau}_{g}^{1}{\small (\kappa}_{n}{\small )}_{u}{\small (\tau}%
_{g}^{1}{\small )}_{u}{\small +25(\tau}_{g}^{1}(\kappa_{n})_{u}{\small )}%
^{2}\right) \\
{\small -6(\kappa}_{n}{\small )}^{3}{\small \tau}_{g}^{1}{\small \kappa}%
_{g}^{2}{\small (\kappa}_{g}^{2}{\small )}_{u}\left(
\begin{array}
[c]{l}%
{\small (\kappa}_{n}{\small )}^{5}{\small (\tau}_{g}^{1}{\small )}%
_{u}{\small +2(\kappa}_{n}{\small )}^{4}{\small \tau}_{g}^{1}{\small (\kappa
}_{n}{\small )}_{u}{\small -2\tau}_{g}^{1}\left(  (\kappa_{n})_{u}\right)
^{3}\\
{\small +2\kappa}_{n}{\small (\kappa}_{n}{\small )}_{u}\left(  (\kappa
_{n})_{u}(\tau_{g}^{1})_{u}+2\tau_{g}^{1}(\kappa_{n})_{uu}\right)
{\small +(\kappa}_{n}{\small )}^{3}{\small (\tau}_{g}^{1}{\small )}_{uuu}\\
{\small +(\kappa}_{n}{\small )}^{2}\left(  {\small 5(\kappa}_{n}{\small )}%
_{u}{\small (\tau}_{g}^{1}{\small )}_{uu}{\small +6(\tau_{g}^{1})_{u}(\kappa
}_{n}{\small )}_{uu}{\small +2\tau}_{g}^{1}{\small (\kappa}_{n}{\small )}%
_{uuu}\right)
\end{array}
\right) \\
{\small +}\left(
\begin{array}
[c]{l}%
{\small (\kappa}_{n}{\small )}^{5}{\small (\tau}_{g}^{1}{\small )}%
_{u}{\small +2(\kappa}_{n}{\small )}^{4}{\small \tau}_{g}^{1}{\small (\kappa
}_{n}{\small )}_{u}{\small -2\tau}_{g}^{1}\left(  (\kappa_{n})_{u}\right)
^{3}\\
{\small +2\kappa}_{n}{\small (\kappa}_{n}{\small )}_{u}\left(  (\kappa
_{n})_{u}(\tau_{g}^{1})_{u}+2\tau_{g}^{1}(\kappa_{n})_{uu}\right)
{\small +(\kappa}_{n}{\small )}^{3}{\small (\tau}_{g}^{1}{\small )}_{uuu}\\
{\small +(\kappa}_{n}{\small )}^{2}\left(  {\small 5(\kappa}_{n}{\small )}%
_{u}{\small (\tau}_{g}^{1}{\small )}_{uu}{\small +6(\tau_{g}^{1})_{u}(\kappa
}_{n}{\small )}_{uu}{\small +2\tau}_{g}^{1}{\small (\kappa}_{n}{\small )}%
_{uuu}\right)
\end{array}
\right)  ^{2}\\
{\small -(\kappa}_{n}{\small \kappa}_{g}^{2}{\small )}^{2}\left(
\begin{array}
[c]{l}%
{\small 4(\kappa}_{n}{\small \tau}_{g}^{1}{\small )}^{3}\left(  4(\kappa
_{n})_{u}(\tau_{g}^{1})_{u}+\kappa_{n}(\tau_{g}^{1})_{uu}\right) \\
{\small +8(\kappa}_{n}{\small (\tau}_{g}^{1}{\small )}^{2}{\small )}%
^{2}\left(  \left(  (\kappa_{n})_{u}\right)  ^{2}+\kappa_{n}(\kappa_{n}%
)_{uu}\right) \\
{\small +6(\kappa}_{n}{\small )}^{2}{\small (\tau}_{g}^{1}{\small )}%
_{u}\left(
\begin{array}
[c]{l}%
{\small (\kappa}_{n}{\small )}^{4}{\small (\tau}_{g}^{1}{\small )}%
_{u}{\small +2}\left(  (\kappa_{n})_{u}\right)  ^{2}{\small (\tau}_{g}%
^{1}{\small )}_{u}{\small +(\kappa}_{n}{\small )}^{2}{\small (\tau}_{g}%
^{1}{\small )}_{uuu}\\
{\small +\kappa}_{n}\left(  {\small 5(\kappa}_{n}{\small )}_{u}{\small (\tau
}_{g}^{1}{\small )}_{uu}{\small +6(\tau_{g}^{1})_{u}(\kappa}_{n}%
{\small )}_{uu}\right)
\end{array}
\right) \\
{\small +(\tau}_{g}^{1}{\small )}^{2}\left(
\begin{array}
[c]{l}%
{\small -20}\left(  (\kappa_{n})_{u}\right)  ^{4}{\small -(\kappa}%
_{n}{\small )}^{4}\left(  {\small 9}\left(  (\kappa_{g}^{2})_{u}\right)
^{2}{\small -20}\left(  (\kappa_{n})_{u}\right)  ^{2}\right) \\
{\small +40\kappa}_{n}\left(  (\kappa_{n})_{u}\right)  ^{2}{\small (\kappa
}_{n}{\small )}_{uu}{\small +20(\kappa}_{n}{\small )}^{2}{\small (\kappa}%
_{n}{\small )}_{u}{\small (\kappa}_{n}{\small )}_{uuu}%
\end{array}
\right) \\
{\small +2\kappa}_{n}{\small \tau}_{g}^{1}\left(
\begin{array}
[c]{l}%
{\small 11(\kappa}_{n}{\small )}^{4}{\small (\kappa}_{n}{\small )}%
_{u}{\small (\tau}_{g}^{1}{\small )}_{u}{\small +4}\left(  (\kappa_{n}%
)_{u}\right)  ^{3}{\small (\tau}_{g}^{1}{\small )}_{u}\\
{\small +\kappa}_{n}{\small (\kappa}_{n}{\small )}_{u}\left(  25(\kappa
_{n})_{u}(\tau_{g}^{1})_{uu}+42(\tau_{g}^{1})_{u}(\kappa_{n})_{uu}\right) \\
{\small +(\kappa}_{n}{\small )}^{2}\left(  5(\kappa_{n})_{u}(\tau_{g}%
^{1})_{uuu}+6(\tau_{g}^{1})_{u}(\kappa_{n})_{uuu}\right)
\end{array}
\right)
\end{array}
\right)
\end{array}
\right) \nonumber
\end{align}
and%
\begin{equation}
h(u,v)=\frac{\kappa_{g}^{2}\left(  \left(  -(\kappa_{n}\kappa_{g}^{2}%
)^{2}+(\kappa_{n})^{4}+2(\left(  \kappa_{n}\right)  _{u})^{2}+2\kappa
_{n}\left(  \kappa_{n}\right)  _{uu}\right)  \tau_{g}^{1}+\left(  4\left(
\kappa_{n}\right)  _{u}\left(  \tau_{g}^{1}\right)  _{u}+\kappa_{n}\left(
\tau_{g}^{1}\right)  _{uu}\right)  \kappa_{n}\right)  }{(\kappa_{n})^{3}%
\tau_{g}^{1}} \label{huv}%
\end{equation}
are the invariants of the soliton surface $S:\Omega=\Omega(u,v)$ with respect
to the ED$^{2}$-frame field in $E^{4}$.
\end{theorem}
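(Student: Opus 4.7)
The plan is to apply the definitions of the Ganchev invariants from \cite{Ganchev}, namely $k=\det\gamma=\gamma_1^1\gamma_2^2-\gamma_1^2\gamma_2^1$ and $h=-\tfrac{1}{2}\mathrm{tr}\,\gamma=-\tfrac{1}{2}(\gamma_1^1+\gamma_2^2)$, to the components $\gamma_i^j$ already collected in (\ref{gammaij}). Since (\ref{gammaij}) is itself obtained by substituting the first fundamental form coefficients (\ref{gij}) and the second fundamental form coefficients (\ref{lij}) into the standard formulas for the linear map $\gamma$, the whole argument reduces to a purely algebraic verification.

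For $h(u,v)$, I would first add the two diagonal entries of $\gamma$ listed in (\ref{gammaij}): $\gamma_1^1=-2\kappa_n\kappa_g^2$ is elementary, while $\gamma_2^2$ is already displayed as $-2\kappa_g^2/((\kappa_n)^3\tau_g^1)$ times a bracketed polynomial in $\kappa_n,\tau_g^1,\kappa_g^2$ and the first and second $u$-derivatives of $\kappa_n,\tau_g^1$. Factoring out $-2\kappa_g^2$, reducing over the common denominator $(\kappa_n)^3\tau_g^1$, and multiplying by $-1/2$ produce the expression in (\ref{huv}); the only check is that the $(\kappa_n)^4\tau_g^1$ term contributed by $\gamma_1^1$ combines with the $-(\kappa_n)^2\tau_g^1(\kappa_g^2)^2$ and derivative terms of $\gamma_2^2$ to give exactly the compact bracket written in (\ref{huv}). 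This step is short.

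For $k(u,v)$, I would separately compute $\gamma_1^1\gamma_2^2$ and $\gamma_1^2\gamma_2^1$. Because $\gamma_1^2$ and $\gamma_2^1$ share the same bracketed polynomial $P$ up to prefactors $1/((\kappa_n)^5(\tau_g^1)^3)$ and $1/(\kappa_n\tau_g^1)$ respectively, their product equals $P^2/((\kappa_n)^6(\tau_g^1)^4)$, which is precisely the squared block appearing in (\ref{kuv}). The product $\gamma_1^1\gamma_2^2$ equals $4\kappa_n(\kappa_g^2)^2/((\kappa_n)^3\tau_g^1)$ times the bracket of $\gamma_2^2$; I would place it over the common denominator $(\kappa_n)^6(\tau_g^1)^4$, subtract $\gamma_1^2\gamma_2^1$, and match each monomial of the resulting numerator against a line of (\ref{kuv}), grouped by degree in $\kappa_g^2$ (the blocks $(\kappa_n\kappa_g^2)^4$, $(\kappa_n)^5\tau_g^1(\kappa_g^2)^3(\kappa_g^2)_u$, $(\kappa_n\kappa_g^2)^2(\cdots)$, and the $\kappa_g^2$-independent remainder).

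The main obstacle will be organizational rather than conceptual. The numerator of $k$ is a polynomial in $\kappa_n,\tau_g^1,\kappa_g^2$ and their $u$-derivatives through order three containing several dozen monomials, and although no geometric identity beyond (\ref{gammaij}) is invoked --- in particular the compatibility relations (\ref{sari1})--(\ref{sari2}) are not needed here --- the bookkeeping is heavy enough that I would delegate the expansion and collection to a computer algebra system. Each individual step is nevertheless a direct substitution, so the argument requires no new ideas beyond those already developed in the derivation of (\ref{gammaij}).
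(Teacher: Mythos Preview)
Your proposal is correct and follows exactly the same approach as the paper: the paper's proof consists of the single observation that $k(u,v)=\det(\gamma(u,v))$ and $h(u,v)=-\tfrac{1}{2}\mathrm{tr}(\gamma(u,v))$ applied to the entries in (\ref{gammaij}), with the algebraic expansion left implicit. Your more detailed bookkeeping plan for organizing the monomials by degree in $\kappa_g^2$ is a reasonable way to carry out what the paper omits.
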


\begin{proof}
From (\ref{gammaij}),%
\[
k(u,v)=\det(\gamma(u,v))
\]
and%
\[
h(u,v)=-\frac{1}{2}tr(\gamma(u,v)),
\]
we obtain the invariants as (\ref{kuv}) and (\ref{huv}).
\end{proof}

On the other hand, from (\ref{gij}), (\ref{W}) and (\ref{cijk}), we find the
coefficients of the shape operator matrices according to the orthonormal
normal frame field $\{E,N\}$ of $S$ as%
\begin{equation}
\left.
\begin{array}
[c]{l}%
h_{11}^{1}=\frac{c_{11}^{1}}{g_{11}}=0,\\
h_{11}^{2}=\frac{c_{11}^{2}}{g_{11}}=\kappa_{n},\\
h_{12}^{1}=\frac{1}{\mathcal{W}}\left(  c_{12}^{1}-\frac{g_{12}}{g_{11}}%
c_{11}^{1}\right)  =-\kappa_{g}^{2},\\
h_{12}^{2}=\frac{1}{\mathcal{W}}\left(  c_{12}^{2}-\frac{g_{12}}{g_{11}}%
c_{11}^{2}\right)  =0,\\
h_{22}^{1}=\frac{1}{\mathcal{W}^{2}}\left(  g_{11}c_{22}^{1}-2g_{12}c_{12}%
^{1}+\frac{(g_{12})^{2}}{g_{11}}c_{11}^{1}\right)  \\
\text{ \ \ \ \ }=\frac{1}{\left(  (\kappa_{n})^{2}\tau_{g}^{1}\right)  ^{2}%
}\left(
\begin{array}
[c]{l}%
-3(\kappa_{n})^{3}\tau_{g}^{1}\kappa_{g}^{2}\left(  \kappa_{g}^{2}\right)
_{u}+(\kappa_{n})^{5}\left(  \tau_{g}^{1}\right)  _{u}+2(\kappa_{n})^{4}%
\tau_{g}^{1}\left(  \kappa_{n}\right)  _{u}\\
-2\tau_{g}^{1}(\left(  \kappa_{n}\right)  _{u})^{3}-(\kappa_{n}\kappa_{g}%
^{2})^{2}\left(  3\kappa_{n}\left(  \tau_{g}^{1}\right)  _{u}+5\tau_{g}%
^{1}\left(  \kappa_{n}\right)  _{u}\right)  \\
+2\kappa_{n}\left(  \kappa_{n}\right)  _{u}\left(  \left(  \kappa_{n}\right)
_{u}\left(  \tau_{g}^{1}\right)  _{u}+2\tau_{g}^{1}\left(  \kappa_{n}\right)
_{uu}\right)  +(\kappa_{n})^{3}\left(  \tau_{g}^{1}\right)  _{uuu}\\
+(\kappa_{n})^{2}\left(  5\left(  \kappa_{n}\right)  _{u}\left(  \tau_{g}%
^{1}\right)  _{uu}+6\left(  \tau_{g}^{1}\right)  _{u}\left(  \kappa
_{n}\right)  _{uu}+2\tau_{g}^{1}\left(  \kappa_{n}\right)  _{uuu}\right)
\end{array}
\right)  ,\\
h_{22}^{2}=\frac{1}{\mathcal{W}^{2}}\left(  g_{11}c_{22}^{2}-2g_{12}c_{12}%
^{2}+\frac{(g_{12})^{2}}{g_{11}}c_{11}^{2}\right)  \\
\text{ \ \ \ \ }=\frac{1}{(\kappa_{n})^{2}\tau_{g}^{1}}\left(  \kappa_{n}%
\tau_{g}^{1}\left(  \kappa_{g}^{2}\right)  ^{2}-4\left(  \kappa_{n}\right)
_{u}\left(  \tau_{g}^{1}\right)  _{u}-\kappa_{n}\left(  \tau_{g}^{1}\right)
_{uu}-\frac{2\tau_{g}^{1}}{\kappa_{n}}\left(  (\left(  \kappa_{n}\right)
_{u})^{2}+\kappa_{n}\left(  \kappa_{n}\right)  _{uu}\right)  \right)  .
\end{array}
\right\}  \label{hij}%
\end{equation}
From (\ref{hij}), we find the shape operator matrices according to normal
vector fields $E$ and $N$ of $S$ as%
\begin{equation}
A_{E}=\left[
\begin{array}
[c]{cc}%
h_{11}^{1} & h_{12}^{1}\\
h_{12}^{1} & h_{22}^{1}%
\end{array}
\right]  \text{ and }A_{N}=\left[
\begin{array}
[c]{cc}%
h_{11}^{2} & h_{12}^{2}\\
h_{12}^{2} & h_{22}^{2}%
\end{array}
\right]  .\label{AF12}%
\end{equation}
Now, we can obtain the Gaussian curvature, mean curvature vector field and
Gaussian torsion of the soliton surface $S$. Also, we can give some important
geometric characterizations such as minimal, flat and semi-umbilic soliton
surfaces with respect to the ED$^{2}$-frame field in $E^{4}$.

\subsection{Flat \textbf{Betchov-Da Rios soliton surface with respect to the
ED}$^{2}$\textbf{-frame field in }$E^{4}$}

\

Here, first, we will give the following theorem which states the Gaussian
curvature of the soliton surface $S$ in order to provide a characterization of
the flatness of this surface.

\begin{theorem}
If $\Omega=\Omega(u,v)$ is a solution of the Betchov-Da Rios equation with
respect to the ED$^{2}$-frame field in $E^{4}$, then the Gaussian curvature of
the soliton surface $S:\Omega=\Omega(u,v)$ is%
\begin{equation}
K=-\frac{\kappa_{n}\left(  4\left(  \kappa_{n}\right)  _{u}\left(  \tau
_{g}^{1}\right)  _{u}+\kappa_{n}\left(  \tau_{g}^{1}\right)  _{uu}\right)
+2\tau_{g}^{1}\left(  \left(  \left(  \kappa_{n}\right)  _{u}\right)
^{2}+\kappa_{n}\left(  \kappa_{n}\right)  _{uu}\right)  }{(\kappa_{n})^{2}%
\tau_{g}^{1}}. \label{Gaussian}%
\end{equation}

\end{theorem}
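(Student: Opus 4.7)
The plan is to invoke the classical Gauss equation for a surface $S\subset E^{4}$ endowed with an orthonormal normal frame: with respect to such a frame $\{E,N\}$ on $S$, the intrinsic Gaussian curvature decomposes as
\[
K \;=\; \det(A_{E}) \;+\; \det(A_{N}),
\]
where $A_{E}$ and $A_{N}$ are the shape operators expressed in an orthonormal tangent basis. Since the matrices of both shape operators in such a basis have already been written down in display (\ref{hij}) and packaged in (\ref{AF12}), essentially no new derivation is required: the proof reduces to taking two $2\times 2$ determinants and simplifying.

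Concretely, I would first compute $\det(A_{E})$. Because $h_{11}^{1}=0$, the diagonal contribution vanishes and only the cross-term survives, giving $\det(A_{E}) = -(h_{12}^{1})^{2} = -(\kappa_{g}^{2})^{2}$. I would then compute $\det(A_{N})$. Since $h_{12}^{2}=0$, the off-diagonal contribution vanishes and $\det(A_{N}) = h_{11}^{2}\cdot h_{22}^{2} = \kappa_{n}\cdot h_{22}^{2}$. Substituting the expression for $h_{22}^{2}$ from (\ref{hij}) and distributing the factor $\kappa_{n}$ produces a leading term $(\kappa_{g}^{2})^{2}$ together with the expected rational expression in $\kappa_{n},\tau_{g}^{1}$ and their first and second $u$-derivatives.

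The one honest observation is the cancellation: the $-(\kappa_{g}^{2})^{2}$ coming from $\det(A_{E})$ exactly kills the $+(\kappa_{g}^{2})^{2}$ produced by $\kappa_{n}h_{22}^{2}$, which is why the final expression (\ref{Gaussian}) no longer involves $\kappa_{g}^{2}$ at all. After this cancellation, the two remaining summands are put over the common denominator $(\kappa_{n})^{2}\tau_{g}^{1}$, and the numerator is collected into the form displayed in (\ref{Gaussian}). There is no real analytic obstacle here; the only care required is to verify that the entries $h_{ij}^{\alpha}$ in (\ref{hij}) truly represent the shape operators in an orthonormal tangent basis, so that the Gauss-equation identity $K=\det(A_{E})+\det(A_{N})$ applies directly. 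This is built into the $\mathcal{W}$-normalization used in (\ref{hij}), so the calculation goes through unchanged.
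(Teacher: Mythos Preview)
Your proposal is correct and follows exactly the same route as the paper: invoke $K=\det(A_{E})+\det(A_{N})$ and read off the entries from (\ref{hij}) and (\ref{AF12}). Your write-up is in fact more explicit than the paper's one-line proof, spelling out the two determinants and the cancellation of the $(\kappa_{g}^{2})^{2}$ terms that the paper leaves implicit.
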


\begin{proof}
Using (\ref{hij}) and (\ref{AF12}), we obtain the Gaussian curvature of $S$
from%
\[
K=\det(A_{E})+\det(A_{N}).
\]
\end{proof}

So, we have

\begin{theorem}
Let $\Omega=\Omega(u,v)$ be a solution of the Betchov-Da Rios equation with
respect to the ED$^{2}$-frame field in $E^{4}$. The soliton surface
$S:\Omega=\Omega(u,v)$ is flat if and only if $a_{34}=\kappa_{n}\tau_{g}%
^{1}\left(  \kappa_{g}^{2}\right)  ^{2}$ holds, where $a_{34}$ is given by
(\ref{a34}).
\end{theorem}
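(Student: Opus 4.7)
The plan is to derive the equivalence directly from the two explicit formulas already available: the Gaussian curvature $K$ from (\ref{Gaussian}) and the expression for $a_{34}$ from (\ref{a34}). By definition, the surface $S$ is flat if and only if $K=0$, so the problem reduces to showing that the condition $a_{34}=\kappa_{n}\tau_{g}^{1}(\kappa_{g}^{2})^{2}$ is algebraically equivalent to the vanishing of the numerator of $K$.

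The first step is to clear the common factor $\kappa_{n}$ in the denominator of (\ref{a34}) and separate out the ellipse-related term $\kappa_{n}^{2}\tau_{g}^{1}(\kappa_{g}^{2})^{2}$ from the remaining curvature derivatives. That is, I would rewrite
\[
a_{34} \;=\; \kappa_{n}\tau_{g}^{1}(\kappa_{g}^{2})^{2} \;-\; \frac{\kappa_{n}\bigl(4(\kappa_{n})_{u}(\tau_{g}^{1})_{u}+\kappa_{n}(\tau_{g}^{1})_{uu}\bigr)+2\tau_{g}^{1}\bigl(((\kappa_{n})_{u})^{2}+\kappa_{n}(\kappa_{n})_{uu}\bigr)}{\kappa_{n}}.
\]
The second step is to recognize the bracketed quantity in the numerator of the last fraction as exactly the numerator appearing in (\ref{Gaussian}). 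Using the formula for $K$, this quantity equals $-K(\kappa_{n})^{2}\tau_{g}^{1}$, so the identity simplifies to
\[
a_{34} \;=\; \kappa_{n}\tau_{g}^{1}(\kappa_{g}^{2})^{2} \;+\; K\,\kappa_{n}\tau_{g}^{1}.
\]

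With this identity in hand, the conclusion is immediate. Indeed, since we assumed throughout that $\kappa_{n}\neq 0$ and $\tau_{g}^{1}\neq 0$, the factor $\kappa_{n}\tau_{g}^{1}$ is nonzero, and hence $a_{34}=\kappa_{n}\tau_{g}^{1}(\kappa_{g}^{2})^{2}$ holds if and only if $K=0$, i.e.\ if and only if $S$ is flat. There is no real obstacle here; the only subtlety is matching the signs and the common factor between the numerator of (\ref{Gaussian}) and the derivative-laden part of (\ref{a34}), which is a careful but routine bookkeeping step.
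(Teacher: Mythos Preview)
Your proof is correct and follows essentially the same route as the paper: both arguments rewrite $a_{34}$ using (\ref{a34}) and compare with the numerator of $K$ in (\ref{Gaussian}) to obtain the identity $K=\dfrac{a_{34}-\kappa_{n}\tau_{g}^{1}(\kappa_{g}^{2})^{2}}{\kappa_{n}\tau_{g}^{1}}$, from which the equivalence is immediate since $\kappa_{n}\tau_{g}^{1}\neq 0$.
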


\begin{proof}
From (\ref{a34}) and (\ref{Gaussian}), we can write the Gaussian curvature as%
\begin{equation}
K=\frac{a_{34}-\kappa_{n}\tau_{g}^{1}\left(  \kappa_{g}^{2}\right)  ^{2}%
}{\kappa_{n}\tau_{g}^{1}} \label{Ky}%
\end{equation}
and this completes the proof.
\end{proof}

\subsection{Minimal \textbf{Betchov-Da Rios soliton surface with respect to
the ED}$^{2}$\textbf{-frame field in }$E^{4}$}

\

Now, let us present the following theorem which states the mean curvature
vector field of the soliton surface. This theorem serves to provide a
characterization of the surface's minimality.

\begin{theorem}
If $\Omega=\Omega(u,v)$ is a solution of the Betchov-Da Rios equation with
respect to the ED$^{2}$-frame field in $E^{4}$, then the mean curvature vector
field of the soliton surface $S:\Omega=\Omega(u,v)$ is%
\begin{equation}
\vec{H}=\frac{1}{2\left(  (\kappa_{n})^{2}\tau_{g}^{1}\right)  ^{2}}\left(
\begin{array}
[c]{c}%
\left(
\begin{array}
[c]{l}%
-3(\kappa_{n})^{3}\tau_{g}^{1}\kappa_{g}^{2}\left(  \kappa_{g}^{2}\right)
_{u}+(\kappa_{n})^{5}\left(  \tau_{g}^{1}\right)  _{u}+2(\kappa_{n})^{4}%
\tau_{g}^{1}\left(  \kappa_{n}\right)  _{u}\\
-2\tau_{g}^{1}(\left(  \kappa_{n}\right)  _{u})^{3}-(\kappa_{n}\kappa_{g}%
^{2})^{2}\left(  3\kappa_{n}\left(  \tau_{g}^{1}\right)  _{u}+5\tau_{g}%
^{1}\left(  \kappa_{n}\right)  _{u}\right) \\
+2\kappa_{n}\left(  \kappa_{n}\right)  _{u}\left(  \left(  \kappa_{n}\right)
_{u}\left(  \tau_{g}^{1}\right)  _{u}+2\tau_{g}^{1}\left(  \kappa_{n}\right)
_{uu}\right)  +(\kappa_{n})^{3}\left(  \tau_{g}^{1}\right)  _{uuu}\\
+(\kappa_{n})^{2}\left(  5\left(  \kappa_{n}\right)  _{u}\left(  \tau_{g}%
^{1}\right)  _{uu}+6\left(  \tau_{g}^{1}\right)  _{u}\left(  \kappa
_{n}\right)  _{uu}+2\tau_{g}^{1}\left(  \kappa_{n}\right)  _{uuu}\right)
\end{array}
\right)  E\\
+\kappa_{n}\tau_{g}^{1}\left(
\begin{array}
[c]{l}%
(\kappa_{n}\kappa_{g}^{2})^{2}\tau_{g}^{1}-\kappa_{n}\left(  4\left(
\kappa_{n}\right)  _{u}\left(  \tau_{g}^{1}\right)  _{u}+\kappa_{n}\left(
\tau_{g}^{1}\right)  _{uu}\right) \\
+\tau_{g}^{1}\left(  (\kappa_{n})^{4}-2\left(  \left(  \kappa_{n}\right)
_{u}\right)  ^{2}-2\kappa_{n}\left(  \kappa_{n}\right)  _{uu}\right)
\end{array}
\right)  N
\end{array}
\right)  . \label{mean}%
\end{equation}

\end{theorem}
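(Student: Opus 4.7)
The plan is to obtain $\vec{H}$ from the shape operator matrices $A_{E}$ and $A_{N}$ already computed in the preceding material via equations (\ref{hij}) and (\ref{AF12}). Since $\{E,N\}$ is an orthonormal frame for the normal bundle of the two-dimensional surface $S$, the standard formula for the mean curvature vector of a surface in $E^{4}$ reads
\[
\vec{H}=\tfrac{1}{2}\bigl(\mathrm{tr}(A_{E})\,E+\mathrm{tr}(A_{N})\,N\bigr).
\]
So the entire proof reduces to computing the two scalar traces and factoring them into a single common-denominator expression.

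First I would write $\mathrm{tr}(A_{E})=h_{11}^{1}+h_{22}^{1}$ and $\mathrm{tr}(A_{N})=h_{11}^{2}+h_{22}^{2}$. Plugging in the values from (\ref{hij}), I get $h_{11}^{1}=0$, so $\mathrm{tr}(A_{E})=h_{22}^{1}$, which already has denominator $((\kappa_{n})^{2}\tau_{g}^{1})^{2}$ and whose numerator is precisely the bracketed expression that multiplies $E$ in the claimed formula. That handles the $E$-component essentially for free, merely by inserting the factor $\tfrac{1}{2}$.

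Second, for the $N$-component, I would compute $\mathrm{tr}(A_{N})=\kappa_{n}+h_{22}^{2}$ and put everything over the common denominator $(\kappa_{n})^{3}\tau_{g}^{1}$ (after clearing the internal fraction $\tfrac{2\tau_{g}^{1}}{\kappa_{n}}$ inside $h_{22}^{2}$). Multiplying out, $\kappa_{n}$ contributes the term $(\kappa_{n})^{4}\tau_{g}^{1}$ to the numerator, and the remaining terms coming from $h_{22}^{2}$ combine into
\[
(\kappa_{n}\kappa_{g}^{2})^{2}\tau_{g}^{1}-\kappa_{n}\bigl(4(\kappa_{n})_{u}(\tau_{g}^{1})_{u}+\kappa_{n}(\tau_{g}^{1})_{uu}\bigr)-2\tau_{g}^{1}\bigl(((\kappa_{n})_{u})^{2}+\kappa_{n}(\kappa_{n})_{uu}\bigr).
\]
Grouping the two $\tau_{g}^{1}$-contributions produces $\tau_{g}^{1}\bigl((\kappa_{n})^{4}-2((\kappa_{n})_{u})^{2}-2\kappa_{n}(\kappa_{n})_{uu}\bigr)$, which is exactly the grouping that appears in (\ref{mean}). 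To match the denominator $2((\kappa_{n})^{2}\tau_{g}^{1})^{2}=2(\kappa_{n})^{4}(\tau_{g}^{1})^{2}$ displayed in the theorem, I multiply numerator and denominator by $\kappa_{n}\tau_{g}^{1}$, which pulls out the leading factor $\kappa_{n}\tau_{g}^{1}$ in front of the bracket for the $N$-coefficient.

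There is no real conceptual obstacle here; the only nontrivial step is bookkeeping, in particular making sure that the single fractional term $\tfrac{2\tau_{g}^{1}}{\kappa_{n}}((\kappa_{n})_{u})^{2}+2\tau_{g}^{1}(\kappa_{n})_{uu}$ inside $h_{22}^{2}$ is cleared consistently with the rest when adding $\kappa_{n}$. After this rescaling, the two component expressions share the common denominator $2((\kappa_{n})^{2}\tau_{g}^{1})^{2}$, and comparing with (\ref{mean}) term by term yields the stated formula, completing the proof.
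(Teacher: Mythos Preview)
Your proposal is correct and follows exactly the same approach as the paper: apply the formula $\vec{H}=\tfrac{1}{2}\bigl(\mathrm{tr}(A_{E})E+\mathrm{tr}(A_{N})N\bigr)$ using the entries $h_{ij}^{k}$ from (\ref{hij}) and (\ref{AF12}), then rearrange into the common-denominator form of (\ref{mean}). The paper's own proof is just this one line, and your additional algebraic bookkeeping merely spells out the rescaling implicit there.
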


\begin{proof}
Using (\ref{hij}) and (\ref{AF12}), we obtain the mean curvature vector field
of $S$ from%
\[
\vec{H}=\frac{1}{2}\left(  tr(A_{E})E+tr(A_{N})N\right)  .
\]
\end{proof}

Thus,

\begin{theorem}
Let $\Omega=\Omega(u,v)$ be a solution of the Betchov-Da Rios equation with
respect to the ED$^{2}$-frame field in $E^{4}$. The soliton surface
$S:\Omega=\Omega(u,v)$ is minimal if and only if $a_{23}=0$ and $a_{34}%
=-\left(  \kappa_{n}\right)  ^{3}\tau_{g}^{1}$ hold, where $a_{23}$ and
$a_{34}$ are given by (\ref{a23}) and (\ref{a34}), respectively.
\end{theorem}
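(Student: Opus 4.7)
The strategy is immediate once one unpacks (\ref{mean}): since the normal bundle of $S$ is spanned by the orthonormal pair $\{E,N\}$, the surface is minimal precisely when $\vec{H}=0$, which happens if and only if the $E$-component and the $N$-component of $\vec{H}$ vanish separately. The whole proof thus reduces to recognizing that the two large bracketed expressions in (\ref{mean}) are, up to non-vanishing scalar factors, the quantities $a_{23}$ (given by (\ref{a23})) and $a_{34}+(\kappa_{n})^{3}\tau_{g}^{1}$ (using (\ref{a34})).

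First I would pull the overall factor $\frac{1}{2((\kappa_{n})^{2}\tau_{g}^{1})^{2}}$ out of (\ref{mean}) and examine the coefficient of $E$. Comparing term-by-term with (\ref{a23}), one sees that this coefficient is exactly $-(\kappa_{n})^{2}\tau_{g}^{1}\,a_{23}$, so the $E$-component of $\vec{H}$ simplifies to $-\frac{a_{23}}{2(\kappa_{n})^{2}\tau_{g}^{1}}$. Under our standing assumption $\kappa_{n}\neq 0$, $\tau_{g}^{1}\neq 0$, this component vanishes if and only if $a_{23}=0$.

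Next I would do the same for the $N$-component. Expanding the bracket multiplied by $\kappa_{n}\tau_{g}^{1}$ and rearranging, one recognizes $\kappa_{n}a_{34}$ from (\ref{a34}) hidden inside: the bracket equals $\kappa_{n}a_{34}+(\kappa_{n})^{4}\tau_{g}^{1}$. Dividing by the common factor $2((\kappa_{n})^{2}\tau_{g}^{1})^{2}$ and simplifying, the $N$-component of $\vec{H}$ collapses to $\frac{a_{34}+(\kappa_{n})^{3}\tau_{g}^{1}}{2(\kappa_{n})^{2}\tau_{g}^{1}}$, which vanishes exactly when $a_{34}=-(\kappa_{n})^{3}\tau_{g}^{1}$. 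Combining the two equivalences yields the claimed characterization.

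The conceptual content is trivial; the only real obstacle is the algebraic bookkeeping in matching the long polynomial in $\kappa_{n},\tau_{g}^{1},\kappa_{g}^{2}$ and their $u$-derivatives that appears in the $E$-coefficient of (\ref{mean}) against the numerator of (\ref{a23}), and similarly lining up the $N$-bracket with (\ref{a34}). Since both expressions were computed earlier in the paper from the same building blocks $c_{ij}^{k}$ in (\ref{cijk}), this matching is mechanical: one checks that every monomial on one side has a counterpart on the other with the same coefficient, and then applies the non-vanishing of $\kappa_{n}$ and $\tau_{g}^{1}$ to clear denominators.
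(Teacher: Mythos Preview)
Your proposal is correct and follows essentially the same route as the paper: the paper rewrites (\ref{mean}) using (\ref{a23}) and (\ref{a34}) to obtain $\vec{H}=\dfrac{(-a_{23}(\kappa_{n})^{2}\tau_{g}^{1})E+\kappa_{n}\tau_{g}^{1}(a_{34}\kappa_{n}+(\kappa_{n})^{4}\tau_{g}^{1})N}{2((\kappa_{n})^{2}\tau_{g}^{1})^{2}}$, then reads off the two conditions from the vanishing of each component, exactly as you describe.
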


\begin{proof}
From (\ref{a34}), (\ref{a23}) and (\ref{mean}), the mean curvature vector
field can be written as%
\begin{equation}
\vec{H}=\frac{\left(  -a_{23}(\kappa_{n})^{2}\tau_{g}^{1}\right)  E+\left(
\kappa_{n}\tau_{g}^{1}\left(  a_{34}\kappa_{n}+(\kappa_{n})^{4}\tau_{g}%
^{1}\right)  \right)  N}{2\left(  (\kappa_{n})^{2}\tau_{g}^{1}\right)  ^{2}}
\label{Hy}%
\end{equation}
and so, the proof completes.
\end{proof}

\subsection{Semi-umbilic \textbf{Betchov-Da Rios soliton surface with respect
to the ED}$^{2}$\textbf{-frame field in }$E^{4}$}

\

In this subsection, we will obtain the Gaussian torsion of the soliton surface
$S$ and state a theorem which contains the necessary and sufficient conditions
for semi-umbilic soliton surface.

The Gaussian torsion (also called the normal curvature function) of a surface
$M\subset E^{4}$ given by a regular patch $\Psi(u,v)$ is (\cite{Aminov},
\cite{Desmet}, \cite{Guadalup}, \cite{Gutierrez})
\begin{equation}
K_{N}=\frac{g_{11}\left(  c_{12}^{1}c_{22}^{2}-c_{12}^{2}c_{22}^{1}\right)
-g_{12}\left(  c_{11}^{1}c_{22}^{2}-c_{11}^{2}c_{22}^{1}\right)
+g_{22}\left(  c_{11}^{1}c_{12}^{2}-c_{11}^{2}c_{12}^{1}\right)  }%
{\mathcal{W}^{3}}. \label{KN}%
\end{equation}
So, by using (\ref{gij}), (\ref{W}) and (\ref{cijk}) in (\ref{KN}), we get

\begin{theorem}
If $\Omega=\Omega(u,v)$ is a solution of the Betchov-Da Rios equation with
respect to the ED$^{2}$-frame field in $E^{4}$, then the Gaussian torsion of
the soliton surface $S:\Omega=\Omega(u,v)$ is%
\begin{equation}
K_{N}=\frac{\kappa_{g}^{2}}{(\kappa_{n})^{3}\tau_{g}^{1}}\left(  \left(
-(\kappa_{n}\kappa_{g}^{2})^{2}+(\kappa_{n})^{4}+2\left(  \left(  \kappa
_{n}\right)  _{u}\right)  ^{2}+2\kappa_{n}\left(  \kappa_{n}\right)
_{uu}\right)  \tau_{g}^{1}+\left(  4\left(  \kappa_{n}\right)  _{u}\left(
\tau_{g}^{1}\right)  _{u}+\kappa_{n}\left(  \tau_{g}^{1}\right)  _{uu}\right)
\kappa_{n}\right)  . \label{KNy}%
\end{equation}

\end{theorem}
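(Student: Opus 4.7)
The plan is to verify (\ref{KNy}) by direct substitution into the definition (\ref{KN}) of the Gaussian torsion. Three pieces of data are already in hand: the first fundamental form coefficients from (\ref{gij}), the normalization $\mathcal{W}$ from (\ref{W}), and the six normal-component scalars $c_{ij}^{k}$ from (\ref{cijk}). Since the preceding proofs for the Gaussian curvature $K$ and the mean curvature vector $\vec{H}$ followed exactly this substitute-and-simplify pattern, no new geometric input is needed here.

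The observation that keeps the bookkeeping manageable is that several of these quantities vanish. From (\ref{gij}) one has $g_{12}=0$, which eliminates the entire middle term in the numerator of (\ref{KN}). From (\ref{cijk}) one has $c_{11}^{1}=0$ and $c_{12}^{2}=0$, which removes one summand from each of the two remaining parenthetical factors. After inserting $g_{11}=1$, $g_{22}=(\kappa_{n})^{4}(\tau_{g}^{1})^{2}$ and $c_{11}^{2}=\kappa_{n}$, the numerator collapses to the single term $c_{12}^{1}\bigl(c_{22}^{2}-(\kappa_{n})^{5}(\tau_{g}^{1})^{2}\bigr)$.

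Next, I would substitute $c_{12}^{1}=-(\kappa_{n})^{2}\tau_{g}^{1}\kappa_{g}^{2}$ together with $\mathcal{W}^{3}=(\kappa_{n})^{6}(\tau_{g}^{1})^{3}$ to extract the external prefactor $-\kappa_{g}^{2}/\bigl((\kappa_{n})^{4}(\tau_{g}^{1})^{2}\bigr)$, and then expand $c_{22}^{2}$ exactly as displayed in (\ref{cijk}). After one factor of $\kappa_{n}$ and one of $\tau_{g}^{1}$ are cancelled against the denominator, the remaining six summands split naturally into a block carrying $\tau_{g}^{1}$ as a common factor, namely $-(\kappa_{n}\kappa_{g}^{2})^{2}+(\kappa_{n})^{4}+2((\kappa_{n})_{u})^{2}+2\kappa_{n}(\kappa_{n})_{uu}$, and a block carrying $\kappa_{n}$ as a common factor, namely $4(\kappa_{n})_{u}(\tau_{g}^{1})_{u}+\kappa_{n}(\tau_{g}^{1})_{uu}$. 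Recombining gives precisely (\ref{KNy}).

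The only real obstacle is sign- and factor-tracking during the simplification: the $-(\kappa_{n})^{5}(\tau_{g}^{1})^{2}$ term coming from $g_{22}c_{11}^{2}c_{12}^{1}$ must correctly flip sign so as to combine with the $-(\kappa_{n})^{3}(\tau_{g}^{1})^{2}(\kappa_{g}^{2})^{2}$ term inside $c_{22}^{2}$ with the opposite overall sign, and the various powers of $\kappa_{n}$ and $\tau_{g}^{1}$ must be reduced consistently. No compatibility equation, Codazzi-type identity, or additional structural fact beyond what has already been established in Section~2 is required.
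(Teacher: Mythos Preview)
Your proposal is correct and follows exactly the route taken in the paper: substitute the data from (\ref{gij}), (\ref{W}) and (\ref{cijk}) into the formula (\ref{KN}) and simplify. The paper states this in one sentence, while you have usefully spelled out the intermediate cancellations; no step is missing or mistaken.
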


A point $p\in M$ is semi-umbilic if and only if $K_{N}(p)=0$ and a surface $M$
immersed in $E^{4}$ is said to be semi-umbilical provided all its points are
semi-umbilic \cite{Gutierrez}. Hence, from (\ref{a34}) and (\ref{KNy}), we can
prove the following theorem:

\begin{theorem}
Let $\Omega=\Omega(u,v)$ be a solution of the Betchov-Da Rios equation with
respect to the ED$^{2}$-frame field in $E^{4}$. The soliton surface
$S:\Omega=\Omega(u,v)$ is semi-umbilic if and only if $a_{34}\kappa_{g}%
^{2}=\left(  \kappa_{n}\right)  ^{3}\tau_{g}^{1}\kappa_{g}^{2}$ holds, where
$a_{34}$ is given by (\ref{a34}).
\end{theorem}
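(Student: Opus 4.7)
The plan is to rewrite the Gaussian torsion (\ref{KNy}) as a single expression proportional to $\kappa_g^{2}\!\left((\kappa_n)^3\tau_g^1-a_{34}\right)$, and then appeal to the definition of semi-umbilicity. Since all points of $S$ must satisfy $K_N=0$ for $S$ to be semi-umbilic (cf.\ \cite{Gutierrez}), the equivalence will follow once the factorization is made explicit.

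First, I would expand the expression (\ref{a34}) for $a_{34}$, so that
\[
a_{34}=\kappa_n\tau_g^1(\kappa_g^2)^2-4(\kappa_n)_u(\tau_g^1)_u-\kappa_n(\tau_g^1)_{uu}-\frac{2\tau_g^1}{\kappa_n}\bigl(((\kappa_n)_u)^2+\kappa_n(\kappa_n)_{uu}\bigr).
\]
Multiplying by $-\kappa_n$ and adding $(\kappa_n)^4\tau_g^1$, one obtains
\[
\kappa_n\bigl((\kappa_n)^3\tau_g^1-a_{34}\bigr)=\bigl(-(\kappa_n\kappa_g^2)^2+(\kappa_n)^4+2((\kappa_n)_u)^2+2\kappa_n(\kappa_n)_{uu}\bigr)\tau_g^1+\bigl(4(\kappa_n)_u(\tau_g^1)_u+\kappa_n(\tau_g^1)_{uu}\bigr)\kappa_n,
\]
which is exactly the bracket appearing in (\ref{KNy}). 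Therefore the Gaussian torsion can be rewritten in the compact form
\[
K_N=\frac{\kappa_g^{2}\bigl((\kappa_n)^{3}\tau_g^{1}-a_{34}\bigr)}{(\kappa_n)^{2}\tau_g^{1}}.
\]

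With this factorization in hand, the semi-umbilicity criterion $K_N\equiv 0$ (together with the standing assumption $\kappa_n,\tau_g^1\neq 0$) becomes equivalent to $\kappa_g^{2}\bigl((\kappa_n)^{3}\tau_g^{1}-a_{34}\bigr)=0$, i.e.\ to the stated identity $a_{34}\kappa_g^{2}=(\kappa_n)^{3}\tau_g^{1}\kappa_g^{2}$, which proves the theorem in both directions.

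The only real obstacle is the algebraic verification that the combination $\kappa_n\bigl((\kappa_n)^{3}\tau_g^{1}-a_{34}\bigr)$ matches term-by-term the bracket in (\ref{KNy}); this is routine but bookkeeping-heavy, since $a_{34}$ contains six terms and each must be tracked carefully. No further geometric input is needed beyond the expression (\ref{KNy}) for $K_N$ proved in the previous theorem and the formula (\ref{a34}) for $a_{34}$.
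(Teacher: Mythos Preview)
Your proposal is correct and follows essentially the same approach as the paper: both proofs use (\ref{a34}) to rewrite the Gaussian torsion (\ref{KNy}) in the compact form $K_N=\dfrac{(\kappa_n)^3\tau_g^1\kappa_g^2-a_{34}\kappa_g^2}{(\kappa_n)^2\tau_g^1}$, from which the equivalence with $K_N\equiv 0$ is immediate. Your write-up simply spells out the term-by-term algebraic verification that the paper leaves implicit.
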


\begin{proof}
From (\ref{a34}) and (\ref{KNy}), the Gaussian torsion can be written as%
\begin{equation}
K_{N}=\frac{(\kappa_{n})^{3}\tau_{g}^{1}\kappa_{g}^{2}-a_{34}\kappa_{g}^{2}%
}{(\kappa_{n})^{2}\tau_{g}^{1}} \label{KNyy}%
\end{equation}
and so, the proof completes.
\end{proof}

\section{\textbf{Curvature Ellipse of the Betchov-Da Rios soliton surface with
respect to the ED}$^{2}$\textbf{-frame field in }$E^{4}$}

Let $M\subset E^{4}$ be a surface and let us take a circle given by
the angle $\theta\in\lbrack0,2\pi]$ in the tangent space $T_{p}M$ at
a point $p\in M$. Thus, let the intersection curve of $M$ and the
hyperplane, which is the direct sum of the normal plane
$T_{p}^{\bot}M$ and the line generated by the direction vector
$X=\cos\theta X_{1}+\sin\theta X_{2}$ at the point $p\in M$, be
denoted by $\gamma_{\theta}$. Here, the vectors $X_{1}$ and $X_{2}$
are orthonormal basis of $T_{p}M$. This curve is called the
\textit{normal section curve} at point $p$ of $M$ and in the
direction $X$. Also, the normal curvature vector $\eta_{\theta}$ of
$\gamma_{\theta}$ is a vector lying in $T_{p}^{\bot}M$ and when the
angle $\theta$ varies from $0$ to $2\pi$, this vector constructs an
ellipse in $T_{p}^{\bot}M$. This ellipse is called a
\textit{curvature ellipse} at a point $p$ of $M$. Now, if the vector
\[
\gamma_{\theta}^{\prime}=X=\cos\theta X_{1}+\sin\theta X_{2}%
\]
is a unit vector of the normal section curve, then the normal curvature
ellipse $\left\Vert \eta_{\theta}\right\Vert $ is given by%
\begin{align*}
x  &  =\frac{h_{11}^{1}+h_{22}^{1}}{2}+\frac{h_{11}^{1}-h_{22}^{1}}{2}%
\cos2\theta+h_{12}^{1}\sin2\theta,\\
y  &  =\frac{h_{11}^{2}+h_{22}^{2}}{2}+\frac{h_{11}^{2}-h_{22}^{2}}{2}%
\cos2\theta+h_{12}^{2}\sin2\theta.
\end{align*}
Thus, the curvature ellipse at the point $p$ of $M$ is denoted by%
\[
E(p)=\{h(X,X):X\in T_{p}M,\text{ }\left\Vert X\right\Vert =1\}\text{,}%
\]
where $h$ is the second fundamental form of the patch $X(u,v)$. To see that
this indicates an ellipse, with the aid of $X=\cos\theta X_{1}+\sin\theta
X_{2}$, it is enough to examine the formula%
\[
h(X,X)=\vec{H}+\cos2\theta\vec{B}+\sin2\theta\vec{C},
\]
where $\vec{H}$ is the mean curvature vector and $\vec{B}$, $\vec{C}$ are
normal vectors given by%
\[
\vec{B}=\frac{h(X_{1},X_{1})-h(X_{2},X_{2})}{2},\text{ \ }\vec{C}%
=h(X_{1},X_{2})\text{.}%
\]
This shows us that when $X$ makes one revolution around the unit circle, the
vector $h(X,X)$ makes two revolutions around the ellipse centered at $\vec{H}%
$. This ellipse is the $E(p)$ ellipse of $X(u,v)$ at the point $p$. Clearly,
the ellipse $E(p)$ can degenerate to a point or a line. For more details about
the curvature ellipse of surfaces, we refer to \cite{Little}, \cite{Mochida},
\cite{Rouxel}, \cite{Wong}, and etc.

Now, let us recall the following invariants that characterize the curvature
ellipse of surfaces.

The determinant $\Delta(p)$ and matrix $A(p)$ for a surface $M\subset E^{4},$
given by a regular patch $M:\Psi(u,v),$ are defined with the aid of
(\ref{hij}) by%
\begin{equation}
\Delta(p)=\frac{1}{4}\det\left[
\begin{array}
[c]{cccc}%
h_{11}^{1} & 2h_{12}^{1} & h_{22}^{1} & 0\\
h_{11}^{2} & 2h_{12}^{2} & h_{22}^{2} & 0\\
0 & h_{11}^{1} & 2h_{12}^{1} & h_{22}^{1}\\
0 & h_{11}^{2} & 2h_{12}^{2} & h_{22}^{2}%
\end{array}
\right]  (p) \label{Delta}%
\end{equation}
and%
\begin{equation}
A(p)=\left[
\begin{array}
[c]{ccc}%
h_{11}^{1} & h_{12}^{1} & h_{22}^{1}\\
h_{11}^{2} & h_{12}^{2} & h_{22}^{2}%
\end{array}
\right]  (p), \label{A(p)}%
\end{equation}
respectively. With the aid of these invariants, one can give the following
classifications for the origin $p$ of the normal space $T_{p}^{\bot}M$:

a) If $\Delta(p)<0$, then the point $p$ lies outside the curvature ellipse and
such a point is called a hyperbolic point of $M$.

b) If $\Delta(p)>0$, then the point $p$ lies inside the curvature ellipse and
such a point is called an elliptic point of $M$.

c) If $\Delta(p)=0$, then the point $p$ lies on the curvature ellipse and such
a point is called a parabolic point of $M$. For this case, we have the
following detailed possibilities:

\ \ i) If $\Delta(p)=0$ and $K(p)>0$, then the point $p$ is an inflection
point of imaginary type.

\ \ ii) If $\Delta(p)=0$, $K(p)<0$ and $rank(A(p))=2$, then the ellipse is
non-degenerate; if $\Delta(p)=0$, $K(p)<0$ and $rank(A(p))=1$, then the point
$p$ is an inflection point of real type.

\ \ iii) If $\Delta(p)=0$ and $K(p)=0$, then the point $p$ is an inflection
point of flat type \cite{Mochida}.

By using (\ref{hij}) in (\ref{Delta}) and (\ref{A(p)}), from the above
definitions, we have

\begin{theorem}
Let $S:\Omega=\Omega(u,v)$ be a solution of the Betchov-Da Rios equation with
respect to the ED$^{2}$-frame field in $E^{4}$. Then the origin $p$ of the
normal space $T_{p}^{\bot}S$ can be classified by the following cases:

\begin{description}
\item[a] If the inequality $4a_{34}\kappa_{n}\tau_{g}^{1}\left(  \kappa
_{g}^{2}\right)  ^{2}+(a_{23})^{2}>0$ is satisfied, then $p$ lies outside the
curvature ellipse and so, it is a hyperbolic point of $S$.

\item[b] If the inequality $4a_{34}\kappa_{n}\tau_{g}^{1}\left(  \kappa
_{g}^{2}\right)  ^{2}+(a_{23})^{2}<0$ is satisfied, then $p$ lies inside the
curvature ellipse and so, it is an elliptic point of $S$.

\item[c] If the conditions $4a_{34}\kappa_{n}\tau_{g}^{1}\left(  \kappa
_{g}^{2}\right)  ^{2}+(a_{23})^{2}=0$ and $\kappa_{g}^{2}\neq0$ are satisfied,
then $p$ is non-degenerate. Also, let the conditions $4a_{34}\kappa_{n}%
\tau_{g}^{1}\left(  \kappa_{g}^{2}\right)  ^{2}+(a_{23})^{2}=0$ and
$\kappa_{g}^{2}=0$ are satisfied. In this case;

if $a_{34}\ $and $\kappa_{n}\tau_{g}^{1}$ have the same signs, then $p$ is an
inflection point of imaginary type;

if $a_{34}\ $and $\kappa_{n}\tau_{g}^{1}$ have the opposite signs, then $p$ is non-degenerate;

if $a_{34}=0$ holds, then $p$ is an inflection point of flat type.
\end{description}
\end{theorem}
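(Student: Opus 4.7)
The plan is to substitute the shape-operator data (\ref{hij}) into the definitions (\ref{Delta}) and (\ref{A(p)}) of Mochida's invariants and then apply the classification rules listed immediately above the theorem statement. The first simplification I would make is to compare the cumbersome expressions for $h_{22}^1$ and $h_{22}^2$ in (\ref{hij}) with (\ref{a23}) and (\ref{a34}); a direct algebraic identification collapses them to
\[
h_{22}^1 = -\frac{a_{23}}{(\kappa_n)^2\tau_g^1},\qquad h_{22}^2 = \frac{a_{34}}{(\kappa_n)^2\tau_g^1},
\]
and together with $h_{11}^1 = 0$, $h_{11}^2 = \kappa_n$, $h_{12}^1 = -\kappa_g^2$, $h_{12}^2 = 0$ this puts $A(p)$ into a manageable form.

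Next I would expand the $4\times 4$ determinant in (\ref{Delta}) along its first column, where only $h_{11}^2 = \kappa_n$ is nonzero, leaving a single $3\times 3$ minor. Expanding that minor along its first column and simplifying yields the compact identity
\[
\Delta(p) = -\,\frac{4\,a_{34}\,\kappa_n\,\tau_g^1\,(\kappa_g^2)^2 + (a_{23})^2}{4\,(\kappa_n)^2\,(\tau_g^1)^2}.
\]
Since $(\kappa_n)^2(\tau_g^1)^2>0$, the sign of $\Delta(p)$ is opposite to that of $4a_{34}\kappa_n\tau_g^1(\kappa_g^2)^2 + (a_{23})^2$, which gives cases (a) and (b) directly from the hyperbolic/elliptic dichotomy of Mochida.

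For the parabolic case $\Delta(p)=0$, I would combine the identity $4a_{34}\kappa_n\tau_g^1(\kappa_g^2)^2 + (a_{23})^2 = 0$ with the simplified Gaussian curvature $K = (a_{34} - \kappa_n\tau_g^1(\kappa_g^2)^2)/(\kappa_n\tau_g^1)$ from (\ref{Ky}). When $\kappa_g^2\neq 0$, the identity forces $a_{34}\kappa_n\tau_g^1 = -(a_{23})^2/(4(\kappa_g^2)^2)\le 0$, so $K \le -(\kappa_g^2)^2 < 0$; moreover the rows $(0,-\kappa_g^2,h_{22}^1)$ and $(\kappa_n,0,h_{22}^2)$ of $A(p)$ cannot be proportional (the first-column entries $0$ and $\kappa_n$ differ), hence $\mathrm{rank}\,A(p)=2$ and $p$ is non-degenerate. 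When $\kappa_g^2=0$, the identity forces $a_{23}=0$ and $K$ reduces to $a_{34}/(\kappa_n\tau_g^1)$; the three subcases then follow by reading off the sign of this ratio and invoking the imaginary-inflection ($K>0$), non-degenerate ($K<0$), and flat-inflection ($K=0$) criteria, respectively.

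The only real obstacle is the bookkeeping in the $4\times 4$ determinant; the key step that trivializes the whole argument is the observation that $h_{22}^1$ and $h_{22}^2$, despite their intimidating appearance in (\ref{hij}), are nothing but $-a_{23}/((\kappa_n)^2\tau_g^1)$ and $a_{34}/((\kappa_n)^2\tau_g^1)$. Once this is in hand, every quantity appearing in the classification is already a rational function of the invariants $\kappa_n,\tau_g^1,\kappa_g^2,a_{23},a_{34}$, and the conclusions follow case by case.
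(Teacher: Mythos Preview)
Your proposal is correct and follows essentially the same route as the paper: compute $\Delta(p)$ from the $h_{ij}^k$, simplify it to $-\dfrac{4a_{34}\kappa_n\tau_g^1(\kappa_g^2)^2+(a_{23})^2}{4(\kappa_n\tau_g^1)^2}$, and then read off the sign cases, using (\ref{Ky}) for $K$ in the parabolic subcases. The only difference is cosmetic order: you recognize $h_{22}^1=-a_{23}/((\kappa_n)^2\tau_g^1)$ and $h_{22}^2=a_{34}/((\kappa_n)^2\tau_g^1)$ \emph{before} expanding the determinant, whereas the paper first writes $\Delta(p)$ in full in terms of $\kappa_n,\tau_g^1,\kappa_g^2$ and their derivatives and only then invokes (\ref{a23}), (\ref{a34}) to collapse it; your shortcut spares that intermediate formula but is logically the same substitution.
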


\begin{proof}
From (\ref{hij}) and (\ref{Delta}), we obtain the invariant $\Delta(p)$ as%
\begin{equation}
\Delta(p)=\frac{1}{4\left(  \kappa_{n}\right)  ^{7}\left(  \tau_{g}%
^{1}\right)  ^{4}}\left(
\begin{array}
[c]{l}%
-4\left(  \kappa_{n}\right)  ^{5}\left(  \tau_{g}^{1}\right)  ^{3}\left(
\kappa_{g}^{2}\right)  ^{2}\left(
\begin{array}
[c]{l}%
\left(  \kappa_{n}\tau_{g}^{1}(\kappa_{g}^{2})^{2}-4\left(  \kappa_{n}\right)
_{u}\left(  \tau_{g}^{1}\right)  _{u}-\kappa_{n}\left(  \tau_{g}^{1}\right)
_{uu}\right)  \kappa_{n}\\
-2\left(  \left(  \left(  \kappa_{n}\right)  _{u}\right)  ^{2}+\kappa
_{n}\left(  \kappa_{n}\right)  _{uu}\right)  \tau_{g}^{1}%
\end{array}
\right)  \\
-\kappa_{n}\left(
\begin{array}
[c]{l}%
-3(\kappa_{n})^{3}\tau_{g}^{1}\kappa_{g}^{2}\left(  \kappa_{g}^{2}\right)
_{u}+(\kappa_{n})^{5}\left(  \tau_{g}^{1}\right)  _{u}+2(\kappa_{n})^{4}%
\tau_{g}^{1}\left(  \kappa_{n}\right)  _{u}-2\tau_{g}^{1}\left(  \left(
\kappa_{n}\right)  _{u}\right)  ^{3}\\
-\left(  \kappa_{n}\kappa_{g}^{2}\right)  ^{2}\left(  3\kappa_{n}\left(
\tau_{g}^{1}\right)  _{u}+5\tau_{g}^{1}\left(  \kappa_{n}\right)  _{u}\right)
\\
+2\kappa_{n}\left(  \kappa_{n}\right)  _{u}\left(  \left(  \kappa_{n}\right)
_{u}\left(  \tau_{g}^{1}\right)  _{u}+2\tau_{g}^{1}\left(  \kappa_{n}\right)
_{uu}\right)  \\
+(\kappa_{n})^{3}\left(  \tau_{g}^{1}\right)  _{uuu}+(\kappa_{n})^{2}\left(
\begin{array}
[c]{l}%
5\left(  \kappa_{n}\right)  _{u}\left(  \tau_{g}^{1}\right)  _{uu}+6\left(
\tau_{g}^{1}\right)  _{u}\left(  \kappa_{n}\right)  _{uu}\\
+2\tau_{g}^{1}\left(  \kappa_{n}\right)  _{uuu}%
\end{array}
\right)
\end{array}
\right)  ^{2}%
\end{array}
\right)  .\label{Deltay}%
\end{equation}
By using (\ref{a34}) and (\ref{a23}) in (\ref{Deltay}), we get%
\begin{equation}
\Delta(p)=-\frac{4a_{34}\kappa_{n}\tau_{g}^{1}\left(  \kappa_{g}^{2}\right)
^{2}+(a_{23})^{2}}{4\left(  \kappa_{n}\tau_{g}^{1}\right)  ^{2}}%
.\label{Deltayy}%
\end{equation}
Firstly, if $4a_{34}\kappa_{n}\tau_{g}^{1}\left(  \kappa_{g}^{2}\right)
^{2}+(a_{23})^{2}>0$ is satisfied, then we have $\Delta(p)<0$ and so (a) is proved.

Secondly, if $4a_{34}\kappa_{n}\tau_{g}^{1}\left(  \kappa_{g}^{2}\right)
^{2}+(a_{23})^{2}<0$ is satisfied, then we have $\Delta(p)>0$ and so (b) is proved.

Finally, let us assume that the equation $4a_{34}\kappa_{n}\tau_{g}^{1}\left(
\kappa_{g}^{2}\right)  ^{2}+(a_{23})^{2}=0$ is satisfied.

If $\kappa_{g}^{2}\neq0$ (at the beginning of the second section, we stated
that $\kappa_{n}\tau_{g}^{1}\neq0$) in this case, then we obtain the Gaussian
curvature as $K=-\frac{4\left(  \kappa_{n}\tau_{g}^{1}\right)  ^{2}\left(
\kappa_{g}^{2}\right)  ^{4}+(a_{23})^{2}}{4\left(  \kappa_{n}\tau_{g}%
^{1}\kappa_{g}^{2}\right)  ^{2}}$ and so, this is always negative. From
(\ref{hij}) and (\ref{A(p)}), the rank of matrix $A(p)$ is $2$. So, the first
part of (c) is completed.

If $\kappa_{g}^{2}=0$ in this case, then we obtain the Gaussian curvature as
$K=\frac{a_{34}}{\kappa_{n}\tau_{g}^{1}}$ and so, the cases of second part of
(c) are proved.
\end{proof}

\section{\textbf{Wintgen Ideal Betchov-Da Rios soliton surface with respect to
the ED}$^{2}$\textbf{-frame field in }$E^{4}$}

In this section, we prove a theorem that characterizes the Wintgen ideal
(superconformal) Betchov-Da Rios soliton surface with respect to the ED$^{2}%
$-frame field in $E^{4}$.

An important inequality%
\[
K+\left\vert K_{N}\right\vert \leq\left\Vert \vec{H}\right\Vert ^{2}%
\]
for Gaussian curvature $K$, mean curvature vector field $\vec{H}$ and Gaussian
torsion $K_{N}$ of a surface in $E^{4}$ has been proved by Wintgen in 1979
\cite{Wintgen}. Also the equality, i.e.
\begin{equation}
K+\left\vert K_{N}\right\vert =\left\Vert \vec{H}\right\Vert ^{2}
\label{wintgen}%
\end{equation}
holds if and only if the curvature ellipse is a circle.

A surface in $E^{4}$ is called a Wintgen ideal (superconformal) surface if it
satisfies the equation (\ref{wintgen}).

So, we can give the following theorem which states the necessary conditions
for a Betchov-Da Rios soliton surface to be Wintgen ideal with respect to the
ED$^{2}$-frame field in $E^{4}$:

\begin{theorem}
Let $\Omega=\Omega(u,v)$ be a solution of the Betchov-Da Rios equation with
respect to the ED$^{2}$-frame field in $E^{4}$. The soliton surface
$S:\Omega=\Omega(u,v)$ is Wintgen ideal (superconformal) if and only if
$a_{23}=0$ and $a_{34}=\left(  \kappa_{n}\right)  ^{2}\tau_{g}^{1}\left(
\kappa_{n}-2\kappa_{g}^{2}\right)  $ hold, where $a_{23}$ and $a_{34}$ are
given by (\ref{a23}) and (\ref{a34}), respectively.
\end{theorem}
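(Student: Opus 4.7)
The plan is to substitute the expressions for $K$, $K_N$ and $\vec{H}$ already obtained in the previous section into the Wintgen equality (\ref{wintgen}) and reduce the resulting relation to a sum of squares. From (\ref{Ky}) and (\ref{KNyy}) I have
\[
K = \frac{a_{34} - \kappa_n \tau_g^1 (\kappa_g^2)^2}{\kappa_n \tau_g^1}, \qquad K_N = \frac{\kappa_g^2\bigl((\kappa_n)^3 \tau_g^1 - a_{34}\bigr)}{(\kappa_n)^2 \tau_g^1},
\]
while the orthonormality of the normal frame $\{E,N\}$ together with (\ref{Hy}) yields
\[
\|\vec{H}\|^2 = \frac{a_{23}^2 + \bigl(a_{34} + (\kappa_n)^3 \tau_g^1\bigr)^2}{4(\kappa_n)^4(\tau_g^1)^2}.
\]

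First I would multiply the identity $\|\vec{H}\|^2 = K + |K_N|$ through by $4(\kappa_n)^4 (\tau_g^1)^2$, which is positive under our standing assumption $\kappa_n\tau_g^1 \neq 0$. A short manipulation shows
\[
4(\kappa_n)^4(\tau_g^1)^2\bigl(\|\vec{H}\|^2 - K\bigr) = a_{23}^2 + \bigl(a_{34} - (\kappa_n)^3 \tau_g^1\bigr)^2 + 4(\kappa_n)^4(\tau_g^1)^2(\kappa_g^2)^2,
\]
and on the other hand $4(\kappa_n)^4(\tau_g^1)^2 K_N = 4(\kappa_n)^2 \tau_g^1 \kappa_g^2 \bigl((\kappa_n)^3 \tau_g^1 - a_{34}\bigr)$. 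Writing $u = a_{34} - (\kappa_n)^3 \tau_g^1$, subtracting the torsion term from the first expression collapses the quadratic combination into the perfect square
\[
a_{23}^2 + \bigl(u + 2(\kappa_n)^2 \tau_g^1 \kappa_g^2\bigr)^2,
\]
so that the Wintgen equality $\|\vec{H}\|^2 - K - K_N = 0$ is equivalent to the vanishing of this sum of two real squares.

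From this the forward direction is immediate: both summands must vanish, giving $a_{23} = 0$ and $u = -2(\kappa_n)^2 \tau_g^1 \kappa_g^2$, which rearranges to $a_{34} = (\kappa_n)^2 \tau_g^1(\kappa_n - 2\kappa_g^2)$. For the converse I would substitute these values back and check directly that $K_N = 2(\kappa_g^2)^2 \ge 0$, $K = (\kappa_n - \kappa_g^2)^2 - 2(\kappa_g^2)^2$ and $\|\vec{H}\|^2 = (\kappa_n - \kappa_g^2)^2$, so that $K + |K_N| = \|\vec{H}\|^2$ holds and the sign choice $|K_N| = K_N$ used above is self-consistent.

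The main obstacle is purely algebraic: spotting that, after cross-term cancellation, the three ingredients $a_{23}^2$, $(a_{34} - (\kappa_n)^3 \tau_g^1)^2$ and $4(\kappa_n)^4(\tau_g^1)^2(\kappa_g^2)^2$ combine with the linear contribution from $K_N$ to yield exactly one perfect square in $u$. Once this identity is recognized, both implications follow in one stroke, and no further analysis of the auxiliary functions $a_{12}, a_{13}, a_{14}, a_{24}$ is required beyond the formulas already fixed in Theorem 1.
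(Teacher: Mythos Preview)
Your approach is essentially the same as the paper's: substitute the compact forms of $K$, $K_N$ and $\|\vec{H}\|^2$ (expressed via $a_{23}$ and $a_{34}$) into the Wintgen equality and recognise the result as a sum of two squares, namely $\bigl(a_{23}^2+(a_{34}+(\kappa_n)^2\tau_g^1(2\kappa_g^2-\kappa_n))^2\bigr)/4(\kappa_n)^4(\tau_g^1)^2$. You carry out the algebra more explicitly (introducing $u=a_{34}-(\kappa_n)^3\tau_g^1$ and completing the square) and you are more careful than the paper in checking that $K_N=2(\kappa_g^2)^2\ge 0$ under the stated conditions, so that the sign choice $|K_N|=K_N$ is consistent; otherwise the two arguments coincide.
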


\begin{proof}
If we use (\ref{Ky}), (\ref{Hy}) and (\ref{KNy}) in (\ref{wintgen}), then we
get%
\[
\frac{(a_{23})^{2}+\left(  a_{34}+\left(  \kappa_{n}\right)  ^{2}\tau_{g}%
^{1}\left(  2\kappa_{g}^{2}-\kappa_{n}\right)  \right)  ^{2}}{4\left(
(\kappa_{n})^{2}\tau_{g}^{1}\right)  ^{2}}=0
\]
and this completes the proof.
\end{proof}

\section{\textbf{An Application for the Betchov-Da Rios Soliton Surface}}

In this section, we construct a soliton surface $\Omega(u,v)$
associated with the Betchov-Da Rios equation and find the
ED$^{2}$-frame field of the $u$-parameter curve $\Omega(u,v)$ for
all $v$ in $E^{4}$. Additionally, we obtain its geometric invariants
$k$ and $h$, the Gaussian curvature $K$, the mean curvature vector
field $\vec{H}$ and Gaussian torsion $K_{N}$. To better understand
our example, we can visualize it by projecting the soliton surface
into 3-dimensional spaces.

Let us consider the soliton surface as%
\begin{equation}
\Omega(u,v)=\left(  \frac{\cos u-u}{2},\frac{\cos u+u}{2},\frac{\sin u}%
{\sqrt{2}},\frac{v}{2\sqrt{2}}\right)  . \label{exsurf}%
\end{equation}
Here, the $u$-parameter curves $\Omega(u,v)$ (for all $v$) of the soliton
surface (\ref{exsurf}) are lying on the hypersurface $M:f(x,y,z,w)=(x+y)^{2}%
+2z^{2}-1=0$ and also, one can check that (\ref{exsurf}) satisfies the
Betchov-Da Rios equation in $E^{4}$. In the following figure, one can see the
projection of the hypersurface $M$ and the $u$-parameter curve $\Omega(u,v)$
for all $v$ into $xyz$-space.

\begin{figure}[H]
\centering
\includegraphics[
height=2.5in, width=6.1in
]{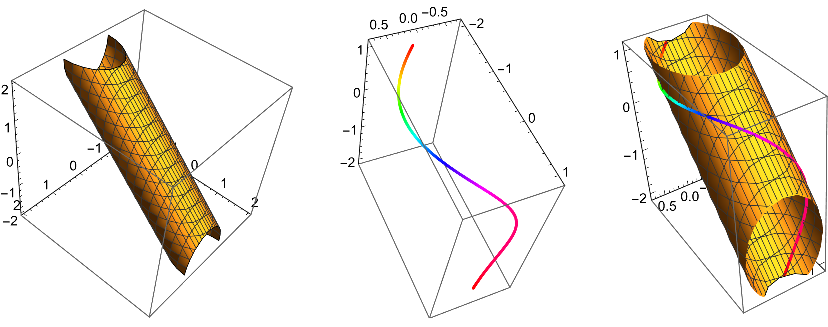}\caption{ }%
\label{fig:1}%
\end{figure}

The unit tangent vector of the $u$-parameter curve $\Omega=\Omega(u,v)$ for
all $v$ on the Betchov-Da Rios soliton surface (\ref{exsurf}) is%
\begin{equation}
T(u,v)=\left(  \frac{-1-\sin u}{2},\frac{1-\sin u}{2},\frac{\cos u}{\sqrt{2}%
},0\right)  .\label{exF1}%
\end{equation}
Furthermore, the unit normal vector field of the hypersurface $M$ is%
\[
\mathcal{N}=\frac{1}{\sqrt{2}}(x+y,x+y,2z,0)
\]
and thus, for all $v\in%
\mathbb{R}
$ we get
\begin{equation}
N(u,v)=\mathcal{N}(\Omega(u,v))=\left(  \frac{\cos u}{\sqrt{2}},\frac{\cos
u}{\sqrt{2}},\sin u,0\right)  .\label{exF4}%
\end{equation}
Because of $\left(  \Omega(u,v)\right)  _{uu}$ (for all $v\in%
\mathbb{R}
$) is linear dependent with $N(u,v),$ we can obtain the remaining frame
vectors of the ED$^{2}$-frame field along the curves of $\Omega(u,v)$ for all
$v\in%
\mathbb{R}
$ as following:%
\begin{equation}
E(u,v)=\left(  \frac{-1+\sin u}{2},\frac{1+\sin u}{2},\frac{-\cos u}{\sqrt{2}%
},0\right)  \label{exF2}%
\end{equation}
and%
\begin{equation}
D(u,v)=(0,0,0,1).\label{exF3}%
\end{equation}
Also, the normal curvature, geodesic curvature of order 2 and geodesic torsion
of order 1 are obtained by%
\begin{equation}
\kappa_{n}(u,v)=-\frac{1}{\sqrt{2}},\text{ }\kappa_{g}^{2}(u,v)=0,\text{
\ }\tau_{g}^{1}(u,v)=\frac{1}{\sqrt{2}}.\label{excurvatures}%
\end{equation}
On the other hand, we obtain the geometric invariants $k$, $h$ and the
Gaussian curvature, mean curvature vector field and Gaussian torsion of the
soliton surface (\ref{exsurf}) as%
\begin{equation}
k=h=K=K_{N}=0,\text{ }\vec{H}=-\frac{1}{2\sqrt{2}}N\text{.}\label{exsurfcurv}%
\end{equation}
Since we find that $k=h=0,$ we reach that the soliton surface consists of flat points.

Also, the determinant $\Delta(p)$ of the soliton surface (\ref{exsurf}) is%
\begin{equation}
\Delta(p)=0\label{Deltaex}%
\end{equation}
for all points $p$ and so, all points of the soliton surface are inflection
points of flat type.

Finally, let us present the figures of the Betchov-Da Rios soliton surface
(\ref{exsurf})\ projections into $xyz,$ $xyw,$ $xzw$ and $yzw$-spaces. These
projections are shown in figures (a), (b), (c), and (d) respectively.

\begin{figure}[H]
\centering
\includegraphics[
height=2.1in, width=6.9in
]{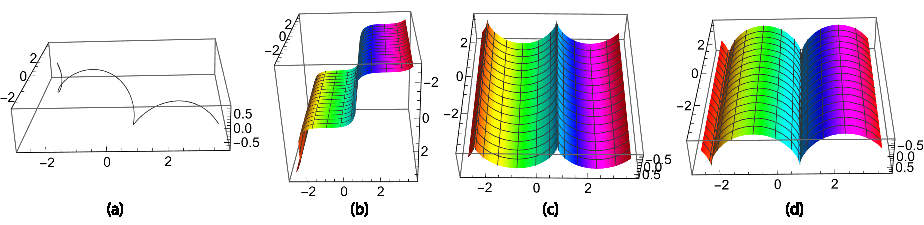}\caption{Projections of the Betchov-Da Rios soliton surface
(\ref{exsurf})}%
\label{fig:2}%
\end{figure}

\section{\textbf{Conclusion and Future Work}}

In this study, for a soliton surface $\Omega=\Omega(u,v)$ associated with the
Betchov-Da Rios equation, firstly we give the derivative formulas of ED$^{2}%
$-frame field of a unit speed curve $u$-parameter curve $\Omega=\Omega(u,v)$
for all $v$. After that, we obtain two geometric invariants $k$ and $h$ of the
soliton surface and we obtain the Gaussian curvature, mean curvature vector
and Gaussian torsion of $\Omega$. With the aid of these surface invariants, we
give some theorems which contain the conditions for flat, minimal and
semi-umbilic soliton surfaces. Also, by obtaining the determinant $\Delta(p)$
and matrix $A(p)$ for a soliton surface, we give an important theorem which
contains the curvature ellipse of the Betchov-Da Rios soliton surface with
respect to ED$^{2}$-frame field in $E^{4}$. We prove a theorem which
characterizes the Wintgen ideal (superconformal) Betchov-Da Rios soliton
surface with respect to the ED$^{2}$-frame field in $E^{4}$. Finally, we
construct an example for Betchov-Da Rios soliton surface with the aid of the
ED$^{2}$-frame field in $E^{4}$, find its geometric invariants and give its
visualizations into 3-space.

We hope that this study will give a new perspective to readers who deal with
the geometric properties of the Betchov-Da Rios equation. As open problems,
the Betchov-Da Rios soliton surface with the aid of different frame fields in
four-dimensional Euclidean space or Minkowski spacetime can give us important
results. Also, maybe interesting results can be obtained by using the visco-Da
Rios equation instead of the Betchov-Da Rios equation.

\end{document}